%
\documentclass[runningheads, orivec]{llncs}
\usepackage[T1]{fontenc}
%
\usepackage{graphicx}
\usepackage{hyperref}
%
\usepackage{color}

%
\usepackage{amsmath}

\usepackage{amssymb}
\usepackage{textgreek}

\newcommand{\dom}{\textbf{Dom}}
\newcommand{\tuple}{\vec}

\newcommand {\Rng}{\texttt{Rng}}

\newcommand {\A}{\mathfrak A}
\newcommand {\B}{\mathfrak B}
\newcommand {\M}{\mathfrak M}

\newcommand{\FO}{\mathbf{FO}}
\newcommand{\ESO}{\mathbf{ESO}}
\newcommand{\D}{\mathbf D}
\newcommand{\E}{\mathbf E}

\newcommand{\DD}{\mathcal D}
\newcommand{\EE}{\mathcal E}
\newcommand{\NE}{\textbf{NE}}

\spnewtheorem{thm}{Theorem}{\bfseries}{\itshape}
\counterwithin{thm}{section}
\spnewtheorem{propo}[thm]{Proposition}{\bfseries}{\itshape}
\spnewtheorem{coro}[thm]{Corollary}{\bfseries}{\itshape}
\spnewtheorem{lem}[thm]{Lemma}{\bfseries}{\itshape}
\spnewtheorem{defin}[thm]{Definition}{\bfseries}{\itshape}

\begin{document}
\title{Strongly First Order Disjunctive Embedded Dependencies in Team Semantics}
\titlerunning{Strongly First Order DEDs in Team Semantics}
%
%
\author{Pietro Galliani\inst{1}\orcidID{0000-0003-2544-5332} }
\authorrunning{P. Galliani}
%
\institute{Università degli Studi dell'Insubria, Via J.H. Dunant, 3
21100 Varese, Italy
\email{pietro.galliani@uninsubria.it}}
\maketitle              
\begin{abstract}
First Order Team Semantics is a generalization of Tarskian Semantics in which formulas are satisfied with respect to sets of assignments. In Team Semantics, it is possible to extend First Order Logic via new types of atoms that describe dependencies between variables; some of these extensions are strictly more expressive than First Order Logic, while others are reducible to it. 

Many of the atoms studied in Team Semantics are inspired by Database Theory and belong in particular to the class of Disjunctive Embedded Dependencies, a very general family of dependencies that contains most of the dependencies of practical interest in the study of databases. 

In this work, I provide a characterization for the (domain-independent) Disjunctive Embedded Dependencies that fail to increase the expressive power of First-Order Team Semantics when added to it. 
\keywords{Team Semantics \and Non-Classical Logic \and Higher-Order Logic}
\end{abstract}

\section{Introduction}
First Order Team Semantics \cite{hodges97} is a generalization of Tarski's semantics in which formulas are satisfied by sets of assignments, called \emph{Teams}, rather than by single assignments. This semantics is reducible to Tarskian Semantics and associates the same truth conditions to first order sentences; but, beginning with V\"a\"an\"anen's work on functional dependence atoms \cite{vaananen07}, Team Semantics was recognized as a natural framework for extending First Order Logic via additional atoms that specify dependencies between the possible values of variables. 

Some of the specific logics thus generated, first and foremost V\"a\"an\"anen's Dependence Logic, have been examined in some depth by now; however, many fundamental questions about the collective properties of this family of extensions of First Order Logic remain open. 

In particular, some atoms, like for instance functional dependence atoms, when added to First Order Logic ($\FO$) bring the expressive power of the resulting formalism all the way up to that of Existential Second Order Logic ($\ESO$) \cite{vaananen07}; others, like inclusion atoms, yield logics whose expressive power is intermediate between $\FO$ and $\ESO$ \cite{gallhella13}; and others yet, like constancy and nonemptiness atoms, fail to increase the expressive power of $\FO$ at all \cite{galliani2015upwards}. 

A general, effective classification of which dependencies (or combinations thereof) belong to these three categories would go a long way to clarify the possibilities of Team Semantics and put some order in the family of the extensions of First Order Logic that can be generated through it. Some characterizations are known for the dependencies that fail to increase the expressive power of $\FO$ (that is to say, that are ``strongly first order'') \textbf{and}  satisfy certain \emph{closure conditions}; however, no such result is yet known for a family of dependencies that is broad enough to include all (or at least most) the dependencies studied so far in Team Semantics and that is natural enough to be of independent interest.

The main contribution of the present work is a characterization of the strongly first order dependencies in the class of (domain-independent) \emph{Disjunctive Embedded Dependencies} (DEDs), which is well known from Database Theory \cite{kanellakis1990elements} and indeed contains most dependencies studied so far in Team Semantics. 

As a consequence of this characterization, we will also see that a family of domain-independent DEDs, if added collectively to $\FO$, increases its expressive power if and only if at least one DED in this family does so individually: in other words, if $\D_1 \ldots \D_n$ are strongly first order domain-independent DEDs then the logic obtained by adding them all to $\FO$ is still equiexpressive to $\FO$. 
\section{Preliminaries}
\subsection*{Notation}
We write $\mathfrak A$, $\mathfrak B$, $\mathfrak C$, \ldots, $\M$ to indicate first order models, and $A$, $B$, $C$, $\ldots M$ for their repective domains. Given a relation symbol $R$, we  write $R^\M$ for the interpretation of $R$ in the model $\M$; and given a constant symbol $c$, we likewise write $c^\M$ for its interpretation in $\M$. Where no ambiguity is possible we identify symbols with the relations and elements that they represent, e.g. we write $\M = (M, R, a)$ for the first order model with domain $M$ such that $R^\M = R \subseteq M^k$ and $a^\M = a \in M$. Given two models $\mathfrak A$ and $\mathfrak B$, we write $\mathfrak A \preceq \mathfrak B$ to say that $\mathfrak B$ is an elementary extension of $\mathfrak A$, and $\mathfrak A \equiv \mathfrak B$ if $\mathfrak A$ and $\mathfrak B$ are elementarily equivalent. 

Given an assignment $s$ from a set of variables $V$ to a domain $A$, an element $a \in A$ and a variable $v \in \text{Var}$, we write $s[a/v]$ for the result of assigning $a$ to $v$ in $s$. If $\tuple v = v_1 \ldots v_k$ is a tuple of variables, we will write $s(\tuple v)$ for the tuple of elements $s(v_1) \ldots s(v_k)$. If $\tuple a = a_1 \ldots a_k$ is a tuple of elements, we write $\Rng(\tuple a) = \bigcup_{i=1}^k a_i$ for the set of all elements occurring in $\tuple a$; and we say that a tuple of elements $\tuple b^{(1)}$ is \emph{disjoint} from a tuple $\tuple b^{(2)}$ \emph{except on} $\tuple a$ if $\Rng(\tuple b^{(1)}) \cap \Rng(\tuple b^{(2)}) \subseteq \Rng(\tuple a)$. Likewise, we say that a tuple $\tuple b$ is disjoint from a set $A$ except on a tuple $\tuple a$ if $\Rng(\tuple b) \cap A \subseteq \Rng(\tuple a)$. We say that a tuple $\tuple a$ \emph{lists} (or \emph{is a list of}) a finite set $A$ if $\tuple a$ has no repetitions and $\Rng(\tuple a)=A$.

Given a tuple of elements $\tuple a = a_1 \ldots a_k \in A^k$, its \emph{identity type} is $\tau(x_1 \ldots x_k) := \bigwedge_{a_i = a_j} (x_i = x_j) \wedge \bigwedge_{a_i \not = a_j} (x_i \not = x_j)$.

Given two extensions $L_1$ and $L_2$ of First Order Logic, we write $L_1 \leq L_2$ if every sentence of $L_1$ is equivalent to some sentence of $L_2$; $L_1 \equiv L_2$ if $L_1 \leq L_2$ and $L_2 \leq L_1$; and $L_1 < L_2$ if $L_1 \leq L_2$ but $L_2 \not \leq L_1$. $\FO$ represents First Order Logic itself, and $\ESO$ represents Existential Second Order Logic.
\subsection*{Team Semantics}
We will now recall the basic definitions and properties of Team Semantics.\footnote{We only consider the more common ``\emph{lax}'' version of this semantics, corresponding to a non-deterministic form of Game-Theoretic Semantics. There also exists a ``\emph{strict}'' variant which corresponds in a similar way to a deterministic Game-Theoretic Semantics, but since for that variant the property of \emph{locality} fails to hold in general (that is to say, the satisfaction conditions of a formula may depend on variables that are not free in it: see \cite{galliani12} for a more detailed discussion) it is usually preferred to work with the lax form of Team Semantics.}
\begin{defin}[Team]
Let $\A$ be a first order model and let $V \subseteq \text{Var}$ be a finite set of variables. A team $X$ over $\A$ with domain $\dom(X) = V$ is a set of assignments $s: V \rightarrow A$. Given a team $X$ and a tuple of variables $\tuple v$, we write $X(\tuple v)$ for the relation $X(\tuple v) = \{s(\tuple v) : s \in X\}$; and given two teams $X$ and $Y$, we write that $X \equiv_{V} Y$ if $X(\tuple v) = Y(\tuple v)$ for a list $\tuple v$ of $V$. 
\end{defin}

\begin{defin}[Team Semantics]
Let $\phi$ be a first order formula in negation normal form,\footnote{It is common in the study of Team Semantics to require that all expressions are in Negation Normal Form. This is because, in general, there is no obvious interpretation for the negation of a dependency atom as well as because, in the context of Team Semantics, the contradictory negation $\M \models_X \sim \!\phi \Leftrightarrow \M \not \models_X \phi$ would bring the expressive power of most logics all the way up to full Second Order Logic \cite{vaananen07,kontinennu11}.} let $\M$ be a model whose signature contains that of $\phi$, and let $X$ be a team over $\M$ whose domain contains the free variables of $\phi$. Then $\phi$ is satisfied by $X$ in $\M$, and we write $\M \models_X \phi$, if this follows from the following rules:
\begin{description}
\item[TS-lit:] If $\phi$ is a literal, $\M \models_X \phi$ $\Leftrightarrow$ $\forall s \in X$, $M \models_s \phi$ in Tarskian semantics; 
\item[TS-$\vee$:] $\M \models_X \psi_1 \vee \psi_2$ $\Leftrightarrow$ $\exists X_1, X_2$ s.t. $X = X_1 \cup X_2$, $M \models_{X_1} \psi_1$ and $M \models_{X_2} \psi_2$; 
\item[TS-$\wedge$:] $M \models_X \psi_1 \wedge \psi_2$ $\Leftrightarrow$ $\M \models_X \psi_1$ and $M \models_X \psi_2$; 
\item[TS-$\exists$:] $M \models_X \exists v \psi$ $\Leftrightarrow$ $\exists Y$ s.t. $\dom(Y) = \dom(X) \cup \{v\}$, $Y \equiv_{\dom(X) \backslash \{v\}} X$, and $M \models_Y \psi$; 
\item[TS-$\forall$:] $M \models_X \forall v \psi$ $\Leftrightarrow$  $M \models_{X[M/v]} \psi$, for $X[M/v] = \{s[m/v] : s \in X, m \in M\}$. 
\end{description}

If $\phi$ is a sentence, we write that $\M \models \phi$ (in the sense of Team Semantics) if and only if $\M \models_{\{\varepsilon\}} \phi$, where $\varepsilon$ is the empty assignment. 
\label{def:TS}
\end{defin}
When working with First Order Logic proper there is no reason to use Team Semantics rather than the simpler Tarskian Semantics: 
\begin{propo}(\cite{vaananen07}, Corollary 3.32)
Let $\M$ be a model, let $\phi$ be a first order formula in negation normal form over the signature of $\M$, and let $X$ be a team over $\M$. Then $\M \models_X \phi$ if and only if, for all $s \in X$, $\M \models_s \phi$ in the usual Tarskian sense. In particular, if $\phi$ is a first order sentence in negation normal form, $\M \models \phi$ in Team Semantics if and only if $\M \models \phi$ in Tarskian Semantics. 
	\label{propo:flat}
\end{propo}
However, Team Semantics allows one to augment First Order Logic in new ways, for example via new \emph{dependence atoms} such as Functional Dependence Atoms \cite{vaananen07}, Inclusion Atoms \cite{galliani12,gallhella13}, Independence Atoms \cite{gradel13} or Anonymity Atoms \cite{vaananen2023atom}:\footnote{These atoms could be defined so that they can apply to tuples of terms and not only to tuples of variables. Since here we always operate in logics at least as expressive as First Order Logic and we are not restricting existential quantification, for simplicity's sake we limit ourselves to tuples of variables.}
\begin{description}
\item[TS-func:] $\M \models_X =\!\!(\tuple v;\tuple w)$ $\Leftrightarrow$ $\forall s, s' \in X$, if $s(\tuple v) = s'(\tuple v)$ then $s(\tuple w) = s'(\tuple w)$; 
\item[TS-inc:] $M \models_X \tuple v \subseteq \tuple w$ $\Leftrightarrow$ $X(\tuple v) \subseteq X(\tuple w)$; 
\item[TS-ind:] $M \models_X \tuple v ~\bot~ \tuple w$ $\Leftrightarrow$ $X(\tuple v \tuple w) = X(\tuple v) \times X(\tuple w)$; 
\item[TS-anon:] $M \models_X \tuple v ~\mbox{\textUpsilon}~ \tuple w$ $\Leftrightarrow$ $\forall s \in X$ $\exists s' \in X$ s.t. $s(\tuple v) = s'(\tuple v)$ but $s(\tuple w) \not = s'(\tuple w)$. 
\end{description}
The logics obtained by adding them to First Order Logic are called respectively Dependence Logic, Inclusion Logic, Independence Logic and Anonymity Logic. In order to study the family of all logics that are obtainable in such a way, it is convenient to use the following notion of \emph{generalized dependency} \cite{kuusisto13}: 
\begin{defin}[Generalized Dependency]
Let $R$ be a $k$-ary relation symbol and let $\D$ be a class, closed under isomorphisms, of models over the signature $\{R\}$. Then $\D$ is a $k$-ary \emph{generalized dependency} and $\FO(\D)$ is the logic obtained by adding to First Order Logic the atoms $\D \tuple v$ for all $k$-tuples of variables $\tuple v$, with the satisfaction conditions 
\begin{description}
\item[TS-$\D$:] 
$\M \models_X \D \tuple v$ $\Leftrightarrow$ $(M, X(\tuple v)) \in \D$
\end{description}
where $(M, X(\tuple v))$ is the model with domain $M$ in which $R$ is interpreted as $X(\tuple v)$. 

If $\DD$ is a family of generalized dependencies, we write $\FO(\DD)$ for the logic obtained by adding to First Order Logic the atoms corresponding to all $\D \in \DD$. 
\label{def:gendep}
\end{defin}
The atoms mentioned above can all be modeled as families of generalized dependencies: for example, Dependence Logic is obtained by adding to First Order Logic all atoms corresponding to generalized dependencies of the form 
\begin{equation}
    \textbf{Dep}_{n,m} = \{(M, R): (M, R) \models \forall \tuple x \tuple y \tuple z ( R\tuple x \tuple y \wedge  R\tuple x \tuple z \rightarrow \tuple y = \tuple z)\}
    \label{eq:func_gen}
\end{equation}
where $n,m \in \mathbb N$, $\tuple x$ has arity $n$, $\tuple y$ and $\tuple z$ have arity $m$, and as usual $\tuple y = \tuple z$ is a shorthand for $\bigwedge_{i=1}^m y_i = z_i$. Inclusion atoms, independence atoms and anonymity atoms can be likewise represented via first order sentences as in (\ref{eq:func_gen}).  Thus, they are \emph{first order} dependencies in the following sense: 
\begin{defin}[First Order Generalized Dependency]
A generalized dependency $\D$ is first order if there exists a first order sentence $\D(R)$ such that $\D = \{(M, R) : (M, R) \models \D(R)\}$.
If so, with a slight abuse of notation we identify $\D$ with the sentence $\D(R)$. 
\end{defin}

Definition \ref{def:gendep} is, however, arguably \emph{too} general. Indeed, it admits ``dependencies'' like $\textbf E = \{(M, R): |M| \text{ is even}\}$, whose corresponding satisfaction condition does not ask anything of $X(\tuple v)$ but instead says that the \emph{model} has an even number of elements. This is unreasonable: intuitively, a dependence atom $\D \tuple v$ should say something about the possible values of $\tuple v$. This can be formalized as follows:\footnote{This notion first appeared in \cite{kontinen2016decidable}, in which it is called ``Universe Independence''.}
\begin{defin}[Domain-Independent Generalized Dependencies]
A $k$-ary dependency $\D$ is \emph{domain-independent} if, for all domains of discourse $M$, $N$ and all $k$-ary relations $R \subseteq M^k \cap N^k$, $(M, R) \in \D$ if and only if $(N, R) \in \D$.
\end{defin}

It is easy to see that, aside from the pathological ``dependency'' $\textbf E$, all examples of dependencies  that we saw so far are domain-independent. 

What can we say, in general, about the properties of a logic of the form $\FO(\DD)$? 
Clearly, it is always the case that $\FO \leq \FO(\DD)$; and if $\D \in \DD$ is not first order then $\FO < \FO(\DD)$, because the $\FO(\DD)$ sentence 
\begin{equation}
\forall \tuple v(\lnot R \tuple v \vee (R \tuple v \wedge \D \tuple v))
\label{eq:defDinFOD}
\end{equation}
defines the class $\D$, which by hypothesis this is not first-order definable. 

However, the converse is not true. As we saw, functional dependence atoms are first order; and yet, (Functional) Dependence Logic is as expressive as Existential Second Order Logic \cite{vaananen07}. 
Inclusion Logic, Independence Logic and Anonymity Logic are likewise more expressive than First Order Logic; but whereas Independence Logic is also as expressive as Existential Second Order Logic \cite{gradel13}, Anonymity Logic is equivalent to Inclusion Logic (see Propositions 4.6.3. and 4.6.4. of \cite{galliani12c}, in which anonymity is called 'non-dependence') and both are only as expressive as the positive fragment of Greatest Fixpoint Logic \cite{gallhella13}. 

\begin{defin}[Definability of a dependency]
Let $\DD$ be a family of dependencies, and let $\E$ be a $k$-ary dependency. Then $\E$ is \emph{definable} in $\FO(\DD)$ if there exist a tuple of $k$ variables $\tuple v = v_1 \ldots v_k$ and a formula $\phi(\tuple v) \in \FO(\DD)$ over the empty signature such that $\M \models_X \E \tuple v \Leftrightarrow \M \models_X \phi(\tuple v)$
for all $\M$ and $X$. 
\end{defin}
\begin{propo}
Let $\DD$ and $\EE$ be families of generalized dependencies. If every $\E \in \EE$ is definable in $\FO(\DD)$ then $\FO(\EE) \leq \FO(\DD) \equiv \FO(\DD, \EE)$. 
\label{propo:def_expr}
\end{propo}
\begin{proof}
Suppose that for every $\E \in \EE$ there exists a formula $\phi_{\E}(\tuple v) \in \FO(\DD)$ over the empty signature that is equivalent to $\E \tuple v$. Then, renaming variables as needed, we see that every occurrence of $\E \tuple w$ for every $\E \in \EE$ and every tuple of variables $\tuple w$ is equivalent to some formula $\phi_{\E}(\tuple w) \in \FO(\DD)$. Thus, every sentence of $\FO(\EE)$ or of $\FO(\DD, \EE)$ is equivalent to some sentence of $\FO(\DD)$; and of course, every sentence of $\FO(\DD)$ is also a sentence of $\FO(\DD, \EE)$. \qed
\end{proof}

It is sometimes useful to ``restrict'' a team to the assignments that satisfy individually some first order formula, like we did in (\ref{eq:defDinFOD}): 
\begin{defin}[$\theta \hookrightarrow \phi$]
Let $\theta$ be a first order formula and let $\phi$ be a $\FO(\DD)$ formula for some collection $\DD$ of generalized dependencies. Then we write $\theta \hookrightarrow \phi$ for the $\FO(\DD)$ formula $\theta' \vee (\theta \wedge \phi)$, 
where $\theta'$ is the first order formula in Negation Normal Form that is equivalent to $\lnot \theta$. 
\end{defin}
\begin{propo}
For all models $\M$, teams $X$, families of generalized dependencies $\DD$, first order formulas $\theta$ and $\FO(\DD)$ formulas $\phi$, $\M \models_X \theta \hookrightarrow \phi \Leftrightarrow \M \models_{X_{|\theta}} \phi$ for $X_{|\theta} = \{s \in X : M \models_s \theta \text{ in Tarski semantics}\}$.
\label{propo:TS-imp}
\end{propo}
\begin{proof}
Suppose that $\M \models_X \theta \hookrightarrow \phi$. Then $X = Y \cup Z$, where $\M \models_Y \theta'$ (for $\theta'$ being equivalent to $\lnot \theta$), $\M \models_Z \theta$ and $\M \models_Z \phi$. Then by Proposition \ref{propo:flat} we have that $M \models_s \lnot \theta$ (in the ordinary Tarskian sense) for all $s \in Y$ and that $\M \models_s \theta$ for all $s \in Z$; therefore, we necessarily have that $Z = X_{|\theta}$ and $Y = X \backslash Z$, and so $\M \models_{X_{|\theta}} \phi$. 

Conversely, suppose that $\M \models_{X_{|\theta}} \phi$. Let $Z = X_{|\theta}$ and $Y = X \backslash Z$: then by Proposition \ref{propo:flat} we have that $\M \models_Y \theta'$ and $\M \models_Z \theta \wedge \phi$, and so $\M \models_X \theta \hookrightarrow \phi$. \qed. 
\end{proof}

Finally we mention an extra connective that can be added to Team Semantics: the \emph{global disjunction} $\sqcup$ such that $\M \models_X \psi_1 \sqcup \psi_2$ if and only if $\M \models_X \psi_1$ or $\M \models_X \psi_2$. $\FO(\DD, \sqcup)$ represents the logic obtained by adding $\sqcup$ to $\FO(\DD)$. 
\subsection*{Strongly First Order Dependencies}
A special case of functional dependency is the ``constancy dependency'' $=\!\!(\emptyset; \tuple w)$, usually written $=\!\!(\tuple w)$, for which $\M \models_X =\!\!(\tuple w)$ $\Leftrightarrow$ $\forall s, s' \in X, s(\tuple w) = s'(\tuple w)$:
\begin{propo}[\cite{galliani12}, \S 3.2] 
Let $=\!\!(\cdot)$ be the family of all constancy dependencies of all arities. Then $\FO(=\!\!(\cdot)) \equiv \FO$.
\end{propo}

Which other dependencies likewise fail to increase the expressive power of $\FO$? In \cite{galliani2015upwards} the class of the \emph{upwards closed dependencies} (i.e., those such that $(M, R) \in \D, R \subseteq S \subseteq M^k \Rightarrow (M, S) \in \D$) was introduced and the following result was shown:
\begin{thm}[\cite{galliani2015upwards}, Theorem 21]
Let $\DD^\uparrow$ be the family of all first order upwards closed dependencies and let $=\!\!(\cdot)$ be the family of all constancy dependencies. Then $\FO(\DD^\uparrow, =\!\!(\cdot)) \equiv \FO$. 
\label{theo:upcl_sfo}
\end{thm}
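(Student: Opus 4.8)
The plan is to establish the nontrivial inclusion $\FO(\DD^\uparrow, =\!\!(\cdot)) \leq \FO$ at the level of sentences, since $\FO \leq \FO(\DD^\uparrow, =\!\!(\cdot))$ holds trivially. I would proceed through a translation that, for every formula $\phi$ of $\FO(\DD^\uparrow, =\!\!(\cdot))$ whose free variables $\tuple x$ list the team domain, produces an ordinary first order formula $\phi^*$ over the base signature together with a fresh relation symbol $R$ of arity $|\tuple x|$, to be interpreted as the team relation $X(\tuple x)$, such that $\M \models_X \phi$ iff $(\M, X(\tuple x)) \models \phi^*$. Because a team whose domain is listed by $\tuple x$ is faithfully encoded by $R := X(\tuple x)$ (distinct assignments give distinct tuples), such a $\phi^*$ suffices: at the sentence level $\tuple x$ is empty, $R$ is a nullary flag that holds for the only relevant team $\{\varepsilon\}$, and $\phi^*$ collapses to a plain $\FO$ sentence equivalent to $\phi$.

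First I would dispose of the constancy atoms. A constancy atom $=\!\!(\tuple w)$ forces $\tuple w$ to take a single value across $X$, so, adapting the known argument behind $\FO(=\!\!(\cdot)) \equiv \FO$, each such atom can be removed by existentially quantifying fresh witness variables for those constant values at the front of the formula and substituting them in, reducing the problem to the case in which only first order upward closed atoms occur. The translation $\phi \mapsto \phi^*$ is then defined by induction. The cases of literals, $\wedge$ and $\forall$ are routine first order substitutions; for $\forall v\,\psi$ the team becomes $X[M/v]$, whose relation $R'$ satisfies $R'(\tuple x, v) \leftrightarrow R(\tuple x)$, so one substitutes accordingly. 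An atom $\D\tuple v$ translates directly, since $\D$ is first order: $\M \models_X \D\tuple v$ iff $(M, X(\tuple v)) \models \D(R')$, and $X(\tuple v)$ is an $\FO$-definable projection of $R$, so one substitutes that projection into the defining sentence $\D(R')$. The genuinely problematic cases are $\vee$ and $\exists$: unwound naively they call for an \emph{existential second order} quantifier — over a split $R = R_1 \cup R_2$ for the disjunction, and over a supplementing relation $S$ with projection $R$ for the existential quantifier.

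The main obstacle, and the heart of the argument, is the elimination of these second order quantifiers; it is exactly here that upward closure must be used. The idea I would pursue is a normal form separating in each subformula a \emph{flat, first order governing part} from a \emph{monotone, upward closed atomic part}. The flat part, handled through the restriction operator $\theta \hookrightarrow \phi$ of Proposition \ref{propo:TS-imp}, is what forces any genuine, value-dependent splitting or supplementation, and it is itself $\FO$-definable; while the contribution of the upward closed atoms is monotone in $R$ — by Lyndon's positivity theorem their defining sentences may be taken positive in $R$ — so that enlarging the witness can only help. Consequently, among all witnessing splits or supplements there is always a canonical, $\FO$-\emph{definable} maximal one (take every tuple permitted by the flat constraints), and the existential second order quantifier can be replaced by that explicit definition.

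Verifying this "take the largest admissible witness" principle for $\vee$ and $\exists$ simultaneously, and strengthening the induction hypothesis so that the governing first order part is available at each step, is where the real work lies. Once it is in place, the sentence-level collapse described above finishes the proof: every sentence of $\FO(\DD^\uparrow, =\!\!(\cdot))$ is equivalent to a first order sentence, and with the trivial inclusion this yields $\FO(\DD^\uparrow, =\!\!(\cdot)) \equiv \FO$.
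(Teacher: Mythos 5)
This theorem is stated here only by citation to \cite{galliani2015upwards} --- the present paper contains no proof of it --- so the comparison is against that source, and your sketch is essentially the argument used there: encode the team by the relation $X(\tuple x)$, hoist the constancy atoms out as existentially quantified constant witnesses, and discharge the second-order quantifiers created by $\vee$ and $\exists$ by always taking the maximal split/supplementation permitted by the first-order flattening of each disjunct or subformula. The one point you defer (``where the real work lies'') is exactly the lemma that makes this go through --- satisfaction of an $\FO(\DD^\uparrow)$ formula is preserved when passing to \emph{any} superteam all of whose members satisfy its flattening, proved by induction with the upwards-closed atoms handling the enlargement and the flattening condition protecting the literals --- and that lemma is true and is the published proof's engine, so your proposal is correct in approach rather than gapped.
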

Thus, first order upwards closed dependencies and constancy dependencies are ``safe'' for $\FO$ in the sense of \cite{galliani2020safe}. The \emph{non-emptiness atoms} $\NE = \{(M, R) : R \not = \emptyset\}$, for which $\M \models_X \NE(\tuple v) \Leftrightarrow |X(\tuple v)| \not = \emptyset$, belong in $\DD^{\uparrow}$ and will be useful in this work. 
\begin{defin}[Strongly First Order Dependencies \cite{galliani2016strongly}]
A dependency $\D$, or a family of dependencies $\DD$,  is strongly first order if $\FO(\D) \equiv \FO$ (respectively $\FO(\DD) \equiv \FO$).  
\end{defin}
\begin{thm}[\cite{galliani2016strongly}, Corollary 8]
Let $\DD^1$ be the family of all unary first order dependencies. Then $\FO(\DD^1) \equiv \FO$.
\end{thm}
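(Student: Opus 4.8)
The plan is to reduce every unary first order dependency to a small stock of ``counting'' conditions and then to express each of those inside $\FO(\DD^\uparrow, =\!\!(\cdot))$, whose equivalence to $\FO$ is granted by Theorem~\ref{theo:upcl_sfo}. First I would invoke the classical normal form for the signature of a single unary predicate: by a routine Ehrenfeucht--Fra\"iss\'e argument the first order theory of a structure $(M,R)$ with $R$ unary is determined by the truncated cardinalities $\min(|R|,N)$ and $\min(|M\setminus R|,N)$, so every first order $\D(R)$ is equivalent to a Boolean combination of the sentences ``$|R|\ge k$'' and ``$|M\setminus R|\ge k$''. Putting this combination in disjunctive normal form, the atom $\D v$ should become a formula $\bigsqcup_i\bigwedge_j\Lambda_{ij}$, where the global disjunction $\sqcup$ realises the outer union of classes, the team conjunction realises the inner intersections, and each $\Lambda_{ij}$ is the team-semantic rendering of one of the four literals ``$|X(v)|\ge k$'', ``$|X(v)|<k$'', ``$|M\setminus X(v)|\ge k$'', ``$|M\setminus X(v)|<k$''.

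Next I would exhibit each $\Lambda_{ij}$. Two of the literals are upward closed first order dependencies, hence already available as atoms of $\DD^\uparrow$: ``$|X(v)|\ge k$'' is $\{(M,R):|R|\ge k\}$, and ``$|M\setminus X(v)|<k$'' is $\{(M,R):|M\setminus R|\le k-1\}$ (upward closed because enlarging $R$ shrinks its complement). The literal ``$|X(v)|\le m$'' I would capture with the constancy gadget $\exists y_1\ldots y_m(\bigwedge_i =\!\!(y_i)\wedge\bigvee_{i} v=y_i)$, which forces $X(v)\subseteq\{y_1,\ldots,y_m\}$ and, for $m=0$, degenerates to the literal $v\ne v$. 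The only genuinely awkward case is ``$|M\setminus X(v)|\ge k$'', which is neither upward nor downward closed; on nonempty teams I would assert it by $\exists y_1\ldots y_k(\bigwedge_i =\!\!(y_i)\wedge\bigwedge_{i<j}y_i\ne y_j\wedge\bigwedge_i y_i\ne v)$, placing $k$ distinct constants outside every value of $v$.

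The subtle point, and the step I expect to be the main obstacle, is the behaviour on the empty team, which definability (and hence Proposition~\ref{propo:def_expr}) forces us to get exactly right. Since $\M\models_\emptyset\D v$ reduces to $(M,\emptyset)\models\D(R)$, a condition on $|M|$ alone, and since all flat formulas hold vacuously on the empty team, the translation must reproduce this condition through its non-flat atoms. The avoidance gadget for ``$|M\setminus X(v)|\ge k$'' is vacuously true on the empty team and so is wrong whenever $|M|<k$; I would repair it as $\gamma_k\sqcup(E_k\wedge v\ne v)$, where $\gamma_k$ is the avoidance gadget guarded by $\NE$ (false on the empty team) and $E_k$ is the atom $\{(M,R):|M|\ge k\}$, which is constant in $R$, hence trivially upward closed and first order, and therefore lies in $\DD^\uparrow$; the conjunct $v\ne v$ isolates the empty team, on which $E_k$ delivers exactly ``$|M|\ge k$''. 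A short case analysis (empty versus nonempty $X$), together with a renaming of the auxiliary variables to keep the conjuncts apart, then shows that every $\Lambda_{ij}$, and hence the whole disjunctive normal form, agrees with $\D v$ on all teams.

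Finally I would assemble the pieces. The construction places each $\D v$ in $\FO(\DD^\uparrow,=\!\!(\cdot),\NE,\sqcup)$, so by the argument of Proposition~\ref{propo:def_expr} (carried out with $\sqcup$ merely along for the ride) we obtain $\FO(\DD^1)\le\FO(\DD^\uparrow,=\!\!(\cdot),\NE,\sqcup)$. To remove the global disjunction I would check that $\sqcup$ commutes with $\wedge$, $\vee$, $\exists$ and $\forall$, so that it can be pushed to the front of any sentence; a top-level $\bigsqcup_i\phi_i$ is then satisfied by $\{\varepsilon\}$ iff some $\phi_i$ is, hence it is equivalent to $\bigvee_i\phi_i^\ast$ once each $\FO(\DD^\uparrow,=\!\!(\cdot))$ sentence $\phi_i$ is replaced by its first order equivalent via Theorem~\ref{theo:upcl_sfo} (recalling $\NE\in\DD^\uparrow$). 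This yields $\FO(\DD^1)\le\FO$, and since $\FO\le\FO(\DD^1)$ trivially, $\FO(\DD^1)\equiv\FO$.
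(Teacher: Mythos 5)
The statement you are proving is quoted in the paper as an imported result (Corollary~8 of \cite{galliani2016strongly}) and is given no proof there, so there is no in-paper argument to compare against; judged on its own, your proof is correct and self-contained relative to the results the paper does quote. The skeleton is sound: the Ehrenfeucht--Fra\"iss\'e normal form for a single unary predicate does reduce every unary first order $\D(R)$ to a Boolean combination of ``$|R|\ge k$'' and ``$|M\setminus R|\ge k$'', and your four gadgets are each correct, including the two non-obvious points. First, $\{(M,R):|M\setminus R|\le k-1\}$ and $E_k=\{(M,R):|M|\ge k\}$ are not domain-independent, but Theorem~\ref{theo:upcl_sfo} only asks for first order and upward closed, which both are (the latter vacuously), so they are legitimately members of $\DD^\uparrow$. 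Second, you correctly identify the empty team as the only place where the naive constancy gadget for ``$|M\setminus X(v)|\ge k$'' lies, and the repair $\gamma_k\sqcup(E_k\wedge v\ne v)$ does return exactly ``$|M|\ge k$'' on $X=\emptyset$ and the intended condition on $X\ne\emptyset$; the remaining three literals already agree with their target on the empty team, which is worth saying explicitly since Proposition~\ref{propo:def_expr} needs agreement on \emph{all} teams. The final step could be shortened: rather than re-deriving the distribution of $\sqcup$ over the connectives and quantifiers, you can invoke Theorem~\ref{thm:sqcup_safe} directly with $\DD=\DD^\uparrow\cup\{=\!\!(\cdot)\}$, since Theorem~\ref{theo:upcl_sfo} already gives $\FO(\DD^\uparrow,=\!\!(\cdot))\equiv\FO$ and hence $\FO(\DD^\uparrow,=\!\!(\cdot),\sqcup)\equiv\FO$. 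What your route buys, compared to simply citing \cite{galliani2016strongly}, is an explicit witness of where unary dependencies sit: they are all definable in $\FO(\DD^\uparrow,=\!\!(\cdot),\sqcup)$, which is consonant with the paper's eventual characterization via $\FO(=\!\!(\cdot),\NE,\sqcup)$ for the domain-independent DED case.
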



Upwards closed dependencies and unary dependencies are somewhat uncommonly encountered when working with Team Semantics. Instead, the class of \emph{downwards closed dependencies} (i.e. those such that $(M, R) \in \D, S \subseteq R \Rightarrow (M, S) \in \D$) is of special importance, because functional dependencies -- the very first ones studied in Team Semantics -- are in it; and in \cite{galliani2019characterizing}, it was shown that a domain-independent\footnote{This result actually uses a weaker, more technical condition than domain-independence called \emph{relativizability}.} downwards closed dependency is strongly first order if and only if it is definable in $\FO(=\!\!(\cdot))$. This entirely answered the question of which dependencies are strongly first order for a fairly general class of dependencies, which however fails to include many dependencies of interest. 

In \cite{galliani2019nonjumping}, a similar approach was adapted for characterizing strongly first order dependencies that satisfy a more general -- if somewhat technical - closure property. That paper also contains the following result, which we will need: 
\begin{thm}[\cite{galliani2019nonjumping}, Proposition 14]
If $\FO(\DD) \equiv \FO$ then $\FO(\DD, \sqcup) \equiv \FO$.
\label{thm:sqcup_safe}
\end{thm}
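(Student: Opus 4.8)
The plan is to show that the global disjunction $\sqcup$ can always be pulled to the very top of a formula, so that every $\FO(\DD,\sqcup)$ sentence becomes a finite global disjunction of $\FO(\DD)$ sentences; the hypothesis $\FO(\DD)\equiv\FO$ then handles each disjunct, and a classical first order disjunction recombines them.

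First I would establish four distributive laws witnessing that $\sqcup$ commutes outward past each of the remaining connectives and quantifiers. Concretely, for all $\M$ and all teams $X$ I would check that $(\psi_1\sqcup\psi_2)\vee\chi \equiv (\psi_1\vee\chi)\sqcup(\psi_2\vee\chi)$, that $(\psi_1\sqcup\psi_2)\wedge\chi \equiv (\psi_1\wedge\chi)\sqcup(\psi_2\wedge\chi)$, that $\exists v(\psi_1\sqcup\psi_2)\equiv \exists v\,\psi_1 \sqcup \exists v\,\psi_2$, and that $\forall v(\psi_1\sqcup\psi_2)\equiv\forall v\,\psi_1\sqcup\forall v\,\psi_2$. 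The conjunction and universal cases are immediate from the clauses \textbf{TS-}$\wedge$ and \textbf{TS-}$\forall$, since each evaluates its subformula against a single fixed team and $\sqcup$ is just a meta-level ``or''. The existential case reduces to the elementary fact that an existential quantifier (here, over the witnessing teams $Y$ of \textbf{TS-}$\exists$) distributes over disjunction.

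Using these laws, together with the commutativity of $\vee$ and the evident associativity and commutativity of $\sqcup$, I would prove by induction on the structure of $\phi$ that every $\FO(\DD,\sqcup)$ formula $\phi$ is equivalent to one of the form $\bigsqcup_{i=1}^{n}\phi_i$ in which each $\phi_i\in\FO(\DD)$ contains no occurrence of $\sqcup$. Atoms, including the dependency atoms $\D\tuple v$, and literals are already $\sqcup$-free and form the base case, while the connective and quantifier cases combine the normal forms of the immediate subformulas via the distributive laws (for a binary connective $\star$ one obtains $\bigsqcup_{i,j}(\phi_i \star \chi_j)$). These rewritings preserve the set of free variables, so when $\phi$ is a sentence each $\phi_i$ is an $\FO(\DD)$ sentence over the same signature. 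Evaluating on the singleton team $\{\varepsilon\}$, $\M\models\bigsqcup_i\phi_i$ holds iff $\M\models\phi_i$ for some $i$; invoking $\FO(\DD)\equiv\FO$ to replace each $\phi_i$ by an equivalent first order sentence $\theta_i$, the original sentence becomes equivalent to the ordinary first order disjunction $\bigvee_i\theta_i$. This gives $\FO(\DD,\sqcup)\leq\FO$, and since trivially $\FO\leq\FO(\DD,\sqcup)$ we conclude $\FO(\DD,\sqcup)\equiv\FO$.

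I expect the only genuinely delicate point to be the forward direction of the distributive law over the team disjunction $\vee$. There a single splitting $X=X_1\cup X_2$ with $\M\models_{X_1}\psi_1\sqcup\psi_2$ and $\M\models_{X_2}\chi$ must be absorbed into one of the two global disjuncts; the key observation is that for the fixed part $X_1$ the meta-level choice between $\psi_1$ and $\psi_2$ is already made, so the very same splitting witnesses the corresponding disjunct $\psi_k\vee\chi$, and no re-splitting is required. Everything else is routine bookkeeping over the satisfaction clauses.
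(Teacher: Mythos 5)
Your proof is correct: the four distributive laws all hold in lax team semantics (including the delicate $\vee$ case, which you handle properly by noting that the meta-level choice on $X_1$ is already fixed), and the resulting $\sqcup$-prenex normal form plus flatness of first order disjunctions yields the claim. The paper itself only cites this result from \cite{galliani2019nonjumping}, and your argument is essentially the same normal-form argument used there, so there is nothing further to compare.
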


In \cite{galliani2022strongly}, it was then shown that if a domain-independent $\D$ is \emph{union-closed}, in the sense that $(M, R_i) \in \D ~\forall i \in I \Rightarrow (M, \bigcup_i R_i) \in \D$, then it is strongly first-order if and only if it is definable in $\FO(=\!\!(\cdot), \NE, \sqcup)$. Examples of union-closed dependencies are the inclusion and anonymity dependencies mentioned before. 

Finally, in \cite{galliani24} a characterization was found for the domain-independent dependencies that are \emph{doubly strongly first order} in the sense that $\FO(\D, \sim\!\D) \equiv \FO$, where $\sim\!\D = \{(M, R) : (M, R) \not \in \D\}$. This can be seen as studying the safety of $\D$ with respect to a richer base language,  in which it is also possible to deny dependencies. 
%
\subsection*{Disjunctive Embedded Dependencies}
One of the concerns of Database Theory is the specification and analysis of dependencies between entries of relational databases \cite{abiteboul1995foundations,Deutsch2009,deutsch2001optimization}. Many such dependencies, like the functional and inclusion dependencies seen above, have been studied in this context, and the following notion of \emph{disjunctive embedded dependency} has been found to suffice for many practically relevant scenarios:
\begin{defin}[Disjunctive Embedded Dependencies]

Let $R$ be a $k$-ary relational symbol. A (unirelational)\footnote{In Database Theory a dependency may involve multiple relations corresponding to different tables; in Team Semantics, however, we only have one relation to work with. 
} \emph{Disjunctive Embedded Dependency} (or DED) over the vocabulary $\{R\}$ is a first order sentence of the form 
\begin{equation}
    \forall \tuple x \left(\phi(\tuple x) \rightarrow \bigvee_i \exists \tuple y^{(i)} \psi_i(\tuple x, \tuple y^{(i)})\right)
    \label{eq:ded}
\end{equation}
where $\phi$ and all $\psi_i$ are conjunctions of relational and identity atoms. 
\end{defin}
Functional dependencies, inclusion dependencies, independence atoms, and nonemptiness atoms are all DEDs, since the sentences $\forall \tuple x \tuple y \tuple z ( (R\tuple x \tuple y \wedge R \tuple x \tuple z) \rightarrow \tuple y = \tuple z)$, $\forall \tuple x \tuple y (R \tuple x \tuple y \rightarrow \exists \tuple z (R \tuple z \tuple x))$, $\forall \tuple x \tuple y \tuple z \tuple w ((R \tuple x \tuple y \wedge R \tuple z \tuple w) \rightarrow R \tuple x \tuple w)$ and $\forall x(x = x \rightarrow \exists \tuple y R \tuple y)$ are of the required form. In fact, since they do not require a disjunction in the consequent they belong to the more restricted class of (non-disjunctive) \emph{embedded dependencies}, which suffices already for many purposes:
\begin{quote}
\emph{Embedded dependencies turn out to be sufficiently
expressive to capture virtually all other classes of
dependencies studied in the literature.} \cite{Deutsch2009}
\end{quote}

Anonymity atoms, however, are not DEDs: for example, they do not satisfy Proposition \ref{propo:DEDhom} below. The even more general class DED$^{\not =}$ allows inequality literals $z\not = w$ inside of (\ref{eq:ded}), and would contain them; but in this work we will limit ourselves to the inequality-free case.

\begin{propo}
Let $\D(R)$ be a DED. Then $\D$ is preserved by unions of chains, in the sense that if $(R_n)_{n \in \mathbb N}$ is a family of relations over $A$ s.t.
\begin{enumerate}
\item $(A, R_n) \in \D$ for all $n \in \mathbb N$; 
\item $R_n \subseteq  R_{n+1}$ for all $n \in \mathbb N$
\end{enumerate}
then $(A, \bigcup_{n \in \mathbb N} R_n) \in \D$. 
\label{propo:DEDuc}
\end{propo}
\begin{proof}
It suffices to observe that DEDs are logically equivalent to $\forall \exists$ sentences. \qed
\end{proof}

%
\begin{propo}
Let $\D(R)$ be a DED, and let $(A, R)$ and $(B, S)$ be such that 
\begin{enumerate}
\item $(A, R)$ is a substructure of $(B, S)$; 
\item There is a homomorphism $\mathfrak h: (B, S) \rightarrow (A, R)$ that is the identity over $A$; 
\item $(B, S) \in \D$. 
\end{enumerate}
Then $(A, R) \in \D$. 
\label{propo:DEDhom}
\end{propo}
\begin{proof}
Suppose that $\D(R)$ is of the form of Equation (\ref{eq:ded}), and take any tuple $\tuple a$ such that $(A, R) \models \phi(\tuple a)$. Since $\phi$ is a conjunction of atoms and $(A, R)$ is a substructure of $(B, S)$, it must be the case that $(B, S) \models \phi(\tuple a)$; and since $(B, S) \models \D(S)$, there exists some $i$ and some tuple $\tuple b$ of elements of $B$ such that $(B, S) \models \psi_i(\tuple a, \tuple b)$. Now, $\mathfrak h(\tuple a) = \tuple a$  and $\psi_i$ is a conjunction of atoms, so for $\tuple c = \mathfrak h(\tuple b)$ we have that $(A, R) \models \psi_i(\tuple a, \tuple c)$; therefore, $(A, R) \models \bigvee_i \exists \tuple y^{(i)} \psi_i(\tuple a, \tuple y^{(i)})$. 
\qed
\end{proof}
One particular consequence of the above result will be useful to us in this work: 
\begin{coro}
Let $\D$ be a domain-independent DED, let $R$ be any relation over some $A$, let $B \supseteq A$, and let $\tuple b_1^{(1)} \ldots \tuple b_n^{(1)},  \tuple b_1^{(2)} \ldots \tuple b_n^{(2)}, \ldots \in B^k \backslash A^k$ be $k$-tuples of elements of $B$ such that, for $\tuple a$ listing the elements of $A \cap \bigcup_{i=1}^n \Rng(\tuple b_i^{(1)})$,
\begin{enumerate}
\item Every $\tuple b_i^{(q)}$ is disjoint from $A$ except on $\tuple a$; 
\item Whenever $q \not = q'$, $\tuple b_1^{(q)} \ldots \tuple b_n^{(q)}$ and $\tuple b_1^{(q')} \ldots \tuple b_n^{(q')}$ are disjoint except on $\tuple a$; 
\item For all $q \in \mathbb N$, the identity type of $\tuple b_1^{(q)} \ldots \tuple b_n^{(q)} \tuple a$ is the same. 
\end{enumerate}
Suppose furthermore that $(B, S) \in \D$ for some $S$ with $R \subseteq S \subseteq R \cup \bigcup_{q \in \mathbb N} \{\tuple b_1^{(q)}, \ldots, \tuple b_n^{(q)}\}$ and that $\{\tuple b_1^{(q)}, \ldots, \tuple b_n^{(q)} : q \in Q\} \subseteq S$ for some nonempty $Q \subseteq \mathbb N$. 

Then $(B, R \cup \{\tuple b_1^{(q)} \ldots \tuple b_n^{(q)} : q \in Q\}) \in \D$ as well. 
\label{coro:DEDhom}
\end{coro}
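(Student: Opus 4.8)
The plan is to deduce the conclusion from Proposition~\ref{propo:DEDhom} together with domain-independence. Write $T = R \cup \{\tuple b_1^{(q)} \ldots \tuple b_n^{(q)} : q \in Q\}$ for the target relation, and let $C = A \cup \bigcup_{q \in Q} \bigcup_{i=1}^n \Rng(\tuple b_i^{(q)})$ be the set of elements occurring in $T$. I would first show $(C, T) \in \D$ by exhibiting $(C, T)$ as the substructure of $(B, S)$ induced on $C$ together with an identity-on-$C$ homomorphic retraction $\mathfrak h \colon (B, S) \to (C, T)$, and then invoke domain-independence to pass from $(C, T) \in \D$ to $(B, T) \in \D$, which is legitimate since $T \subseteq C^k \cap B^k$ and $C \subseteq B$.

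First I would check that $(C, T)$ really is the substructure of $(B, S)$ induced on $C$, i.e.\ that $T = S \cap C^k$. The inclusion $T \subseteq S \cap C^k$ is immediate. For the converse, take $\tuple s \in S \cap C^k$: since $S \subseteq R \cup \bigcup_{q} \{\tuple b_1^{(q)}, \ldots, \tuple b_n^{(q)}\}$, either $\tuple s \in R \subseteq T$, or $\tuple s = \tuple b_i^{(q)}$ for some $q$. In the latter case, if $q \notin Q$ then by Conditions~1 and~2 every component of $\tuple b_i^{(q)}$ lying outside $A$ also lies outside every $q'$-tuple with $q' \ne q$; since $\tuple b_i^{(q)} \in B^k \setminus A^k$ it has such a component, which therefore lies outside $C$, contradicting $\tuple s \in C^k$. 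Hence $q \in Q$ and $\tuple s \in T$.

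Next I would build the retraction $\mathfrak h$. Fix some $q_0 \in Q$. By Condition~3, for each $q$ the concatenation $\tuple b_1^{(q)} \ldots \tuple b_n^{(q)} \tuple a$ has the same identity type, so there is a canonical bijection sending the components of $\tuple b_1^{(q)} \ldots \tuple b_n^{(q)}$ to those of $\tuple b_1^{(q_0)} \ldots \tuple b_n^{(q_0)}$ position-by-position; this bijection fixes the elements of $\tuple a$ and respects all equalities. I would then define $\mathfrak h$ to be the identity on $C$, to send each element of $B \setminus C$ occurring in some $q$-tuple with $q \notin Q$ to its image under this bijection, and to send all remaining elements of $B$ to a fixed element of $C$. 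It follows that $\mathfrak h(\tuple b_i^{(q)}) = \tuple b_i^{(q_0)} \in T$ for $q \notin Q$, while tuples of $R$ and tuples $\tuple b_i^{(q)}$ with $q \in Q$ are fixed and already lie in $T$; thus $\mathfrak h$ maps $S$ into $T$ and is a homomorphism that is the identity over $C$. Proposition~\ref{propo:DEDhom} then yields $(C, T) \in \D$, and domain-independence finishes the argument.

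The only real obstacle is the construction and verification of $\mathfrak h$: one must check that gluing the per-$q$ collapsing maps yields a single well-defined function on $B$ (which is exactly where the disjointness Conditions~1 and~2 are used, guaranteeing that an element outside $C$ belongs to the tuples of at most one $q$), and that each collapsing map is itself well-defined at the level of elements even when a component is repeated within a tuple (which is where the uniform identity type of Condition~3 is used). Everything else — the induced-substructure check and the final appeal to domain-independence — is routine.
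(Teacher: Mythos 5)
Your proof is correct and follows essentially the same route as the paper's: realize the target relation as the induced substructure on its support, collapse the tuples with $q \notin Q$ onto the $q_0$-copy via a retraction that is the identity on that support, apply Proposition~\ref{propo:DEDhom}, and finish by domain-independence. The only cosmetic difference is that the paper first uses domain-independence to shrink $B$ to $A \cup \bigcup_{\tuple r \in S}\Rng(\tuple r)$ before defining the homomorphism, whereas you define the retraction on all of $B$ by sending the irrelevant elements to a fixed element of $C$ — which is harmless since such elements occur in no tuple of $S$.
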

\begin{proof}
Let $C = A \cup\bigcup_{\tuple r \in S} \Rng(\tuple r)$, $C' = A \cup \bigcup \{\Rng(\tuple b^{(q)}_i): q \in Q, i \in 1 \ldots n\}$ and $S' = R \cup \{\tuple b_1^{(q)}, \ldots, \tuple b_n^{(q)} : q \in Q\}$. We need to prove that $(B, S') \in \D$. 

By the domain independence of $\D$, we know that $(C, S) \in \D$. Note that $S \cap (C')^k = S'$, because all tuples in $S \backslash S'$ must be of the form $\tuple b_i^{(q)}$ for some $q \not \in Q$ (and so must contain some element not in $C'$): therefore, $(C', S')$ is a substructure of $(C, S)$. If we can find a homomorphism $\mathfrak h: (C, S) \rightarrow (C', S')$ that keeps $(C', S')$ fixed, by Proposition \ref{propo:DEDhom} we have that $(C', S') \in \D$; and then by the domain independence of $\D$ we have that $(B, S') \in \D$, as required. 

It remains to define this $\mathfrak h$. Fix an arbitrary $q_0 \in Q$; then, for all $c \in C$, let  
\[
    \mathfrak h(c) = \left\{ \begin{array}{l l}
    c & \text{ if } c \in C'; \\
    (\tuple b^{(q_0)}_i)_j & \text{ if } c \not \in C' \text{ is the } j\text{-th element of } \tuple b^{(q)}_i \text{ for } q \not \in Q, i \in 1 \ldots n.
    \end{array}
    \right.
\]
By construction, $\mathfrak h$ keeps $C'$ fixed; we need to show that it is well-defined and that it is a homomorphism. 

If $c \in C \backslash C'$ then $c$ occurs in at least one $\tuple b_i^{(q)}$ for $q \not \in Q$; but it may occur in more than one. Suppose then that $c = (\tuple b^{(q)}_i)_j =  (\tuple b^{(q')}_{i'})_{j'}$ for $q, q' \not \in Q$. We observe that $q = q'$: indeed, otherwise we would have that $\tuple b^{(q)}_i$ and $\tuple b^{(q')}_{i'}$ intersect only over $\tuple a$, and since $c \not \in C' \supseteq \Rng(\tuple a)$ it cannot be the case that $c$ occurs in $\tuple a$. Then $(\tuple b^{(q)}_i)_j =  (\tuple b^{(q)}_{i'})_{j'}$; and since the identity types of $\tuple b^{(q)}_1 \ldots \tuple b^{(q)}_n \tuple a$ and $\tuple b^{(q_0)}_1 \ldots \tuple b^{(q_0)}_n \tuple a$ are the same, we have that $(\tuple b^{(q_0)}_i)_j = (\tuple b^{(q_0)}_{i'})_{j'}$ and $\mathfrak h$ is indeed well-defined.

Finally, suppose that $(c_1 \ldots c_k) \in S$. If $(c_1 \ldots c_k) \in S'$ then all $c_j$ are in $C'$, and so $(\mathfrak h(c_1) \ldots \mathfrak h(c_k)) = (c_1 \ldots c_k) \in S'$. If instead $(c_1 \ldots c_k) \in S \backslash S'$, it must be the case that $(c_1 \ldots c_k) = \tuple b^{(q)}_i$ for some $q \not \in Q$ and some $i \in 1 \ldots n$. But then have that $(\mathfrak h(c_1) \ldots \mathfrak h(c_k)) = \tuple b^{(q_0)}_i \in S'$. Indeed, for all $j \in 1 \ldots n$, if $c_j \not \in C'$ then $\mathfrak h(c_j) = ( b^{(q_0)}_i)_j$ by definition. If instead $c_j \in C'$ then $\mathfrak h(c_j) = c_j$. But in this case $c_j$ occurs in $\tuple a$, because $\tuple b^{(q)}_i$ and $C'$ intersect only over $\tuple a$; and since the identity types of $\tuple b^{(q)}_1 \ldots \tuple b^{(q)}_n \tuple a$ and $\tuple b^{(q_0)}_1 \ldots \tuple b^{(q_0)}_n \tuple a$ are the same, it follows that $(\tuple b^{(q_0)}_i)_j = (\tuple b^{(q)}_i)_j = c_j$ as required.

This shows that $\mathfrak h$ is a homomorphism and concludes the proof. \qed
\end{proof}
\section{Strongly First Order DEDs}
\subsection*{Some Properties}
The next two propositions are proven as in (\cite{galliani2022strongly}, Lemma 1 and Proposition 3): 
\begin{propo}
Let $\D(R)$ be a strongly first order, $k$-ary DED. Then there cannot exist an infinite chain of $k$-ary relations $R_1 \subseteq S_1 \subseteq R_2 \subseteq S_2 \subseteq \ldots$ over some domain $M$ such that $(M, R_n) \in \D$ and $(M, S_n) \not \in \D$ for all $n \in \mathbb N$. 
\label{propo:noChain}
\end{propo}
\begin{proof}
Suppose that such a chain exists. Then $M$ must be infinite, and without loss of generality we can assume that it contains $\mathbb N$; and since $\bigcup_{n \in \mathbb N} R_n = \bigcup_{n \in \mathbb N} S_n$, by Proposition \ref{propo:DEDuc} it must hold that $(M, \bigcup_{n \in \mathbb N} S_n) \in \D$. 

Now let $\M$ be a first order model with domain $M$, with a unary predicate $N$ such that $N^{\M} = \mathbb N$, a binary predicate $<$ interpreted as the usual order relation over $\mathbb N$, a $(k+1)$-ary relation $R$ such that $R^\M = \{(n, \tuple a) : n \in \mathbb N, \tuple a \in R_n\}$ and a $(k+1)$-ary relation $S$ such that $S^\M = \{(n, \tuple a) : n \in \mathbb N, \tuple a \in S_n\}$. Then for every element $d \in \mathbb N$, the $\FO(\D)$ sentence 
\begin{equation}
    \exists i (i < d \wedge \forall \tuple v(S i \tuple v \hookrightarrow \D \tuple v))
    \label{eq:noD}
\end{equation}
must be false, as it asserts that there exists a family $I \subseteq \{1 \ldots d\}$ of indices such that $(M, \bigcup_{n \in I} S_n) \in \D$ and that is impossible since $\bigcup_{n \in I} S_n = S_{\max(I)}$. Therefore, if $\phi(d)$ is the first order sentence equivalent to (\ref{eq:noD}) and $\phi(z)$ is the first order formula obtained by replacing the constant symbol $d$ with a new variable $z$, we have that $\M \models \forall z (N(z) \rightarrow \lnot \phi(z))$. Now let us choose an elementary extension $\M' \succeq \M$ in which the interpretation of $N$ contains an element $d$ that is greater than all standard integers (this can be easily done by compactness). Then, if for $n \in N^{\M'}$  we write $R'_n$ for $\{\tuple a : (n, \tuple a) \in R^{\M'}\}$ and $S'_n$ for $\{\tuple a : (n, \tuple a) \in S^{\M'}\}$, since $\M'$ is an elementary extension of $\M$ it must be the case that $(M', R'_n) \in \D$ and $R'_n \subseteq S'_n \subseteq R'_{n+1}$ for all $n \in \mathbb N$. But then, as before, it must be the case that $(M', \bigcup_{n \in \mathbb N} S'_n) = (M', \bigcup_{n \in \mathbb N} R'_n) \in \D$; and since $n < d$ for all $n \in \mathbb N$, we must have that $\phi(d)$ is true in $\M'$, contradicting the fact $\M' \succeq \M$.  

It remains to verify that (\ref{eq:noD}) indeed states that there exists some family $I$ of indexes, all smaller than $d$, such that $(M, \bigcup_{n \in I} S_n) \in \D$. Suppose that this is the case, and let $X = \{ \varepsilon[n/i] : n \in I\}$. Then $\{\varepsilon\} \equiv_{\emptyset} X$ and $\M \models_X i < d$; and furthermore, for 
\[
    Y = X[\tuple M/\tuple v]_{| S i \tuple v} = \{s : \dom(s) = \{i\} \cup \Rng(\tuple v): s(i) \in I, s(\tuple v) \in S_i\},
\]
we have that $(M, Y(\tuple v)) = (M, \bigcup_{n \in I} S_n) \in \D$, and so $\M \models_X \forall \tuple v (S i \tuple v \hookrightarrow \D \tuple v)$ as required. 

Conversely, suppose that the initial team $\{\varepsilon\}$ satisfies  (\ref{eq:noD}). Then there exists a team $X$ with $\dom(X) = \{i\}$ such that $\M \models_X i < d \wedge \forall \tuple v (S i \tuple v \hookrightarrow \D \tuple v)$. Now let $I = X(i)$. I state that $I$ is as required: indeed, since $\M \models_X i < d$ we have that $n < d$ for all $n \in I$, and since $\M \models_X \forall \tuple v (S i \tuple v \hookrightarrow \D \tuple v)$ we have that, for $Y = X[\tuple M/\tuple v]_{|S i \tuple v]}$, $(M, Y(\tuple v)) \in \D$. But 
\[
    Y(\tuple v) = \{s(\tuple v) : s(i) \in I, s(\tuple v) \in S_i\} = \bigcup_{n \in I} S_i
\]
and this concludes the proof. 
\qed
\end{proof}
\begin{propo}
Let $\D(R)$ be a domain-independent, strongly first order DED, let $(A_1, R_1) \in \D$ and let $(A_1, R_1) \preceq (A_2, R_2)$ (that is, $(A_2, R_2)$ is an elementary extension of $(A_1, R_1)$). Then $(A_2, S_1) \in \D$ for all $S_1$ with $R_1 \subseteq S_1 \subseteq R_2$. 
\label{propo:noDjump}
\end{propo}
\begin{proof}
Suppose that it is not so: then we can build a chain as per Proposition \ref{propo:noChain}. Indeed, consider the theory  
\begin{align*}
    T := & \{\phi(R, \tuple a): \tuple a \text{ tuple in } A_2, \phi(R, \tuple a) \in \FO, (A_2, R_2) \models \phi(R_2, \tuple a)\} \cup\\
    &\{S \tuple b: \tuple b \in R_2\} \cup \{\forall \tuple x(S \tuple x \rightarrow R \tuple x), \lnot \D(S)\}
\end{align*}
where $\phi(R, \tuple a)$ ranges over all first order formulas with parameters in $A_2$ that do not contain $S$. By compactness, $T$ is satisfiable: indeed, every finite subtheory $T_0$ of $T$ will be entailed by some sentence of the form $\phi(R,\tuple a) \wedge \bigwedge_{i=1}^t S \tuple b_i \wedge \forall \tuple x (S \tuple x \rightarrow R \tuple x) \wedge \lnot \D(S)$, where $(A_2, R_2) \models \phi(R_2, \tuple a)$ and all the $\tuple b_i$ are in $R_2$. But then, since $(A_1, R_1) \preceq (A_2, R_2)$, we can find in $A_1$ tuples $\tuple c\tuple d_1 \ldots \tuple d_t$ with the same identity type as $\tuple a \tuple b_1 \ldots \tuple b_t$ such that $(A_1, R_1) \models \phi(R_1, \tuple c)$ and all the $\tuple d_i$ are in $R_1$. Then in $(A_2, R_2, S_1)$ we have that $\phi(R_2, \tuple c)$, because $(A_2, R_2)$ is an elementary extension of $(A_1, R_1)$; that every $\tuple d_i$ is in $S_1$, because it is in $R_1$ and $R_1 \subseteq S_1$; and that $S_1$ is contained in $R_2$ and $(A_2, S_1) \not\in \D$, by hypothesis.  Thus, $T_0$ can be satisfied by interpreting $\tuple a \tuple b_1 \ldots \tuple b_t$ as $\tuple c \tuple d_1 \ldots \tuple d_t$, $R$ as $R_2$, and $S$ as $S_1$. 

 Now, a model of $T$ describes an elementary extension $(A_3, R_3) \succeq (A_2, R_2)$ and some $S_2$ with $R_2 \subseteq S_2 \subseteq R_3$ and $(A_3, S_2) \not \in \D$. 

Iterating this construction, we obtain an infinite elementary chain $(A_1, R_1) \preceq (A_2, R_2) \preceq \ldots$ such that $(A_i, R_i) \in \D$ for all $i$, as well as relations $S_i$ such that $R_i \subseteq S_i \subseteq R_{i+1}$ and $(A_{i+1}, S_i) \not \in \D$ for all $i$. Now if we let $M  =\bigcup_i A_i$, by the domain independence of $\D$ we have that $(M, R_i) \in \D$, $(M, S_i) \not \in \D$, and $R_i \subseteq S_i \subseteq R_{i+1}$ for all $i \in \mathbb N$; but Proposition \ref{propo:noChain} shows that this is not possible if $\D$ is strongly first order. \qed
\end{proof}
Then by a straightforward application of compactness we get the following: 
\begin{coro}
Let $\D$ be a $k$-ary, domain-independent, strongly first order DED, let $(A, R) \in \D$ and let $\tuple a$ be a tuple of elements of $A$ and $(\tuple a^{(i)})_{i \in \mathbb N}$ be a sequence of $k$-tuples over $A$ such that 
\begin{enumerate}
\item $\tuple a^{(i)} \in R$ for all $i \in \mathbb N$; 
\item If $i \not = j$, $\tuple a^{(i)}$ and $\tuple a^{(j)}$ are disjoint apart from $\tuple a$; 
\item The identity types of $\tuple a^{(i)} \tuple a$ are the same for all $i \in \mathbb N$. 
\end{enumerate}
Furthermore , let $B \supseteq A$, and let $(\tuple b^{(i)})_{i \in \mathbb N}$ be a sequence of $k$-tuples over $B$ s.t. 
\begin{enumerate}
\item Every $\tuple b^{(i)}$ is disjoint from $A$ except on $\tuple a$; 
\item If $i \not = j$, $\tuple b^{(i)}$ and $\tuple b^{(j)}$ are disjoint except on $\tuple a$; 
\item The identity types of the $\tuple b^{(i)}\tuple a$ are all equal to the identity types of the $\tuple a^{(i)} \tuple a$. 
\end{enumerate}
Then $(B, R \cup \{\tuple b^{(i)} : i \in \mathbb N\}) \in \D$. 
\label{coro:add_bqs}
\end{coro}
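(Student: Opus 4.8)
The plan is to use compactness to manufacture an elementary extension of $(A,R)$ into which the fresh tuples $\tuple b^{(i)}$ can be inserted \emph{inside} the relation, and then to invoke Proposition \ref{propo:noDjump}; the strongly-first-order and DED hypotheses enter only through that proposition. Concretely, I would work in the language $\{R\}$ expanded with the elementary diagram of $(A,R)$ — a constant $\underline a$ for every $a \in A$ — together with one fresh constant $c_\beta$ for each element $\beta$ that occurs in some $\tuple b^{(i)}$ but not in $A$. Let $T$ consist of: (i) the elementary diagram of $(A,R)$; (ii) for every $i$, the atom $R(\underline{\tuple b^{(i)}})$, where the term-tuple $\underline{\tuple b^{(i)}}$ uses $\underline a$ in the coordinates whose value lies in $\Rng(\tuple a)$ and the fresh constant $c_\beta$ in the coordinates whose value is an element $\beta \notin A$; and (iii) all identity and non-identity statements among these constants dictated by $B$, namely $c_\beta \neq c_{\beta'}$ for $\beta \neq \beta'$, $c_\beta \neq \underline a$ for all $a \in A$, and the equalities recorded by the identity type of each $\tuple b^{(i)} \tuple a$.

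The crux is finite satisfiability of $T$, and this is exactly what the two lists of conditions buy us. Given a finite $T_0 \subseteq T$, only finitely many tuples $\tuple b^{(i_1)}, \ldots, \tuple b^{(i_m)}$ and finitely many $A$-parameters $F$ are mentioned. Since the $\tuple a^{(i)}$ are pairwise disjoint apart from $\tuple a$, each element of $F \setminus \Rng(\tuple a)$ can occur in the non-$\tuple a$ coordinates of at most one $\tuple a^{(i)}$, so all but finitely many $\tuple a^{(i)}$ avoid $F$ there; choosing $m$ such indices $l_1, \ldots, l_m$, I would interpret each $\underline a$ as $a$ and each fresh constant of $\tuple b^{(i_j)}$ as the corresponding coordinate of $\tuple a^{(l_j)}$. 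Condition 3 on both sequences (equal identity types relative to $\tuple a$) makes this assignment well-defined and forces $\underline{\tuple b^{(i_j)}}$ to evaluate to $\tuple a^{(l_j)}$; condition 1 ($\tuple a^{(l_j)} \in R$) validates the atoms in (ii); and condition 2 together with the choice avoiding $F$ validates the (in)equalities in (iii). Thus $(A,R)$ itself models $T_0$, so by compactness $T$ has a model $(A^*, R^*)$.

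By clause (i) we have $(A,R) \preceq (A^*, R^*)$; identifying $A$ with its copy, the interpretations of the fresh constants exhibit a copy of $B_0 := A \cup \bigcup_i \Rng(\tuple b^{(i)})$ inside $A^*$ in which every $\tuple b^{(i)}$ is a genuine tuple of $R^*$. Hence $S := R \cup \{\tuple b^{(i)} : i \in \mathbb N\}$ satisfies $R \subseteq S \subseteq R^*$, and since $\D$ is a domain-independent, strongly first order DED with $(A,R) \in \D$, Proposition \ref{propo:noDjump} yields $(A^*, S) \in \D$. Finally, as $S \subseteq (B_0)^k \cap (A^*)^k \subseteq B^k \cap (A^*)^k$, domain independence transfers this first to $(B_0, S) \in \D$ and then to $(B, S) \in \D$, which is the desired conclusion.

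The only real subtlety I anticipate is the interaction of clause (iii) with finite satisfiability: one must ensure that the non-$\tuple a$ coordinates of the chosen witnesses $\tuple a^{(l_j)}$ avoid the finitely many $A$-parameters of $T_0$, so that the inequalities $c_\beta \neq \underline a$ are respected, which is precisely why the hypothesis supplies \emph{infinitely many} pairwise-disjoint witnesses $\tuple a^{(i)}$ rather than a single one. Everything else is bookkeeping with identity types.
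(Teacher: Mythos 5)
Your proposal is correct and follows essentially the same route as the paper: build, by compactness from the elementary diagram of $(A,R)$ plus fresh constants realizing the identity types of the $\tuple b^{(i)}$ over $\tuple a$, an elementary extension in which all $\tuple b^{(i)}$ lie in the relation (using the infinitely many pairwise-disjoint witnesses $\tuple a^{(i)} \in R$ to verify finite satisfiability), then apply Proposition \ref{propo:noDjump} and domain independence. The subtlety you flag — choosing witnesses whose non-$\tuple a$ coordinates avoid the finitely many parameters mentioned — is exactly the point the paper's proof makes as well.
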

\begin{proof}
Consider the theory 
\begin{align*}
  T:=&\{\phi(\tuple c) \in \FO : (A, R) \models \phi(\tuple c), \tuple c \text{ tuple in } A\} \cup\\ 
   &\{\tau_{\tuple b^{(1)} \ldots \tuple b^{(q)}\tuple c}(\tuple b^{(1)} \ldots \tuple b^{(q)} \tuple c) : q \in \mathbb N, \tuple c \text{ tuple in } A\} \cup \\
   &\{R \tuple b^{(q)} : q \in \mathbb N\}
\end{align*}
where $\tau_{\tuple b^{(1)} \ldots \tuple b^{(q)}\tuple c}(\tuple u^{(1)} \ldots \tuple u^{(q)} \tuple v)$ is the identity type of the tuple $\tuple b^{(1)} \ldots \tuple b^{(q)} \tuple c$. 

This theory is finitely satisfiable. Indeed, any finite subset of it will be entailed by some formula of the form 
\begin{equation}
    \phi(\tuple c) \wedge \tau(\tuple b^{(1)} \ldots \tuple b^{(q)}\tuple c) \wedge \bigwedge_{i=1}^q R \tuple b^{(i)}
    \label{eq:subth_bq}
\end{equation}
for some $q \in \mathbb N$, some tuple $\tuple c$ of elements of $A$ and some first order formula $\phi$ such that $(A, R) \models \phi(\tuple c)$, where $\tau$ is the identity type of $\tuple b^{(1)} \ldots \tuple b^{(q)} \tuple c$. Note that the $\tuple b^{(t)}$ may intersect $\tuple c$ only over $\tuple a$, because all elements of $\tuple c$ are in $A$. Then we can pick $q$ tuples $\tuple a^{(p_1)} \ldots \tuple a^{(p_q)}$ from our $\tuple a^{(i)}$ that are disjoint from $\tuple c$ apart from $\tuple a$: indeed, every element in $\Rng(\tuple c) \backslash \Rng(\tuple a)$ may appear in at most one $\tuple a^{(i)}$, because they are pairwise disjoint apart from $\tuple a$, and we have infinitely many of them. Then if we interpret $b^{(1)} \ldots b^{(q)}$ as $\tuple a^{(p_1)} \ldots \tuple a^{(p_q)}$ we have that (\ref{eq:subth_bq}) is satisfied in $A$: indeed, $\phi(\tuple c)$ holds and all $\tuple a^{(i)}$ are in $R$ by construction, and since like the $\tuple b^{(1)} \ldots \tuple b^{(q)}$ the $\tuple a^{(p_1)} \ldots \tuple a^{(p_q)}$ are pairwise disjoint outside from $\tuple a$, intersect $\tuple c$ only on $\tuple a$, and have the same identity type together with $\tuple a$, the identity types of $\tuple b^{(1)} \ldots \tuple b^{(q)} \tuple c$ and of $\tuple a^{(p_1)} \ldots \tuple a^{(p_q)} \tuple c$ are the same. 

Therefore, by compactness, there exists an elementary extension $(A', R') \succeq (A, R)$ with $\tuple b^{(1)}, \tuple b^{(2)}, \ldots \in R'$. Then by Proposition \ref{propo:noDjump} it must be the case that $(A', R \cup \{\tuple b^{(1)}, \tuple b^{(2)}, \ldots\}) \in \D$; and finally, since $\D$ is domain-independent we have that $(B,  R \cup \{\tuple b^{(1)}, \tuple b^{(2)}, \ldots) \in \D$ as well. \qed
\end{proof}
\begin{propo}
Let $\D$ be a $k$-ary domain-independent dependency. Suppose that there exist a domain $B = A \cup \bigcup_{i \in \mathbb N} B_i$, where all $B_i$ are disjoint from each other and from $A$, and relations $R \subseteq A^k$, $Q_i, T_i \subseteq (A \cup B_i)^k \backslash A^k$ such that 
\begin{enumerate}
\item For all $i, j \in \mathbb N$, there is an isomorphism $\mathfrak f_{i,j}: (A \cup B_i, R \cup Q_i, R \cup T_i) \rightarrow (A \cup B_j, R \cup Q_j, R \cup T_j)$ such that $\mathfrak f_{i,j}(a) = a$ for all $a \in A$; 
\item for all $I, J \subseteq \mathbb N$ and for $R_{I,J} = R \cup \bigcup_{i \in I} Q_i \cup \bigcup_{j \in J} T_j$
we have that 
\[
    (B, R_{I,J}) \in \D \text{ if and only if } I \cap J = \emptyset.
\]
\end{enumerate}
Then $\D$ is not strongly first order. 
\label{propo:DED_RQT}
\end{propo}
\begin{proof}
Without loss of generality, we can assume that $B \cap \mathbb N = \emptyset$. For any $\ell \in \mathbb N$, $\ell>1$, let $\M_\ell$ be a model with domain $B \cup \{1 \ldots \ell\}$, with a unary predicate $N$ with $N^{\M_\ell} = \{1 \ldots \ell\}$, with two constants $\textbf 1$ and $\textbf{end}$ with $\textbf 1^{\M_\ell} = 1$ and $\textbf{end}^{\M_\ell} = \ell$, with a binary relation $E$ such that $E^{\M_\ell} = \{(i, i+1): i \in 1 \ldots \ell-1\}$  and with two $(k+1)$-ary relations $Q$, $T$ with $Q^{\M_\ell} = \{(i, \tuple a): i \in 1\ldots \ell, \tuple a \in R \cup Q_i\}$
and $T^{\M_\ell} = \{(i, \tuple a): i \in 1\ldots \ell, \tuple a \in R \cup T_i\}$. Then the $\FO(\D)$ sentence
\begin{align*}
\forall n \forall n'( &(N(n) \wedge N(n')) \hookrightarrow \exists v \exists v'( (n=\textbf 1 \hookrightarrow v=\textbf 1) \wedge (n=\textbf{end} \hookrightarrow v\not =\textbf 1) \wedge\\
& (E(n,n') \hookrightarrow ((v=\textbf 1 \hookrightarrow v' \not = \textbf 1) \wedge (v \not = \textbf 1 \hookrightarrow v' = \textbf 1))) \wedge\\
&\forall \tuple z ( ((v = \textbf 1 \wedge Q n \tuple z) \vee (v \not = \textbf 1 \wedge Tn\tuple z)) \hookrightarrow \D \tuple z) \wedge \\
&\forall \tuple w ( ((v' = \textbf 1 \wedge Q n' \tuple w) \vee (v' \not = \textbf 1 \wedge Tn'\tuple w)) \hookrightarrow \D \tuple w) \wedge\\
&(n =n' \hookrightarrow v = v'))
)
\end{align*}
is true in $\M_\ell$ if and only if $\ell$ is even.\footnote{This is based on the Dependence Logic sentence to express even cardinality found in \cite{vaananen07}: in brief, for each $n \in 1 \ldots \ell$ we choose an index $v$ that is either $1$ or not so, and the dependence statements ensure that we cannot associate both $1$ and a different value for the same $n$; whenever some value $n$ is associated with $1$, its successor cannot be so; the index $1$ must be associated with $1$, and the last element $\ell$ cannot be so.} A standard back-and-forth argument shows that no first order sentence can be true in $\M_\ell$ if and only if $\ell$ is even; thus, $\FO(\D)$ must be more expressive than $\FO$, i.e. $\D$ is not strongly first order. 

Let us check that the sentence written above is true in $\M_\ell$ if and only if $\ell$ is even. Suppose that $\ell$ is even, and let $Y$ be the set of all assignments over the variables $n,n',v,v'$ such that  
\begin{itemize}
\item $s(n)$ and $s(n')$ are in $\{1 \ldots \ell\}$; 
\item $s(v) = 1$ if $s(n)$ is odd, and $s(v) = \ell$ otherwise; 
\item $s(v') = 1$ if $s(n')$ is odd, and $s(v') = \ell$ otherwise. 
\end{itemize}
Then 
\begin{itemize}
\item $\M_\ell \models_Y (n= \textbf 1 \hookrightarrow v=\textbf 1) \wedge (n=\textbf{end} \hookrightarrow v\not =\textbf 1)$, because for all $s \in Y$ if $s(n) = \textbf{1}^{\M_\ell} = 1$ then $s(v) = 1$ (as $1$ is odd) and if $n = \textbf{end}^{\M_\ell} = \ell$ then $s(v) = \ell \not = 1$  (as $\ell$ is even);
\item $\M_\ell \models_Y  E(n,n') \hookrightarrow ((v=\textbf 1 \hookrightarrow v' \not = \textbf 1) \wedge (v \not = \textbf 1 \hookrightarrow v' = \textbf 1))$: indeed, whenever $s(n') = s(n) + 1$ for $s \in Y$, by construction, if $s(v) = 1$ then $s(n)$ is odd, so $s(n')$ is even and $s(v')  = \ell \not = 1$, and if $s(v) \not= 1$ then $s(n)$ is even, and so $s(n')$ is odd and $s(v') = 1$;
\item $\M_\ell \models_Y \forall \tuple z ( ((v = \textbf 1 \wedge Q n \tuple z) \vee (v \not = \textbf 1 \wedge Tn\tuple z)) \hookrightarrow \D \tuple z)$: indeed, for 
\[
Z = Y[\tuple M / \tuple z]_{|(v = \textbf 1 \wedge Q n \tuple z) \vee (v \not = \textbf 1 \wedge Tn\tuple z)},
\]
we have that 
\begin{align*}
Z(\tuple z) &= \bigcup \{R \cup Q_i : \exists s \in Y, s(n) = i, s(v) = 1\} \cup\\
&~~~~~\bigcup \{R \cup T_i : \exists s \in Y, s(n) = i, s(v) \not = 1\}\\
&= \bigcup \{R \cup Q_i : i \in 1 \ldots \ell, i \text{ odd}\} \cup \bigcup \{R \cup T_i : i \in 1 \ldots \ell, i \text{ even}\}\\
& = R_{I,J} \text{ for } I = \{i \in 1 \ldots \ell, i \text{ odd}\} \text{ and } J = \{1 \ldots \ell\} \backslash I
\end{align*}
and so $(M_\ell, Z(\tuple z)) \in \D$ by hypothesis. 
\item $\M_\ell \models_Y \forall \tuple w ( ((v' = \textbf 1 \wedge Q n' \tuple w) \vee (v' \not = \textbf 1 \wedge Tn'\tuple w)) \hookrightarrow \D \tuple w)$: the argument is as in the previous point;
\item $\M_\ell \models_Y (n = n' \hookrightarrow v = v')$, because if $s \in Y$ is such that $s(n) = s(n')$ by construction we also have that $s(v) = s(v')$.
\end{itemize}
Now, for $X = \{\varepsilon\}[\tuple M/nn']_{|N(n) \wedge N(n')}$ we have that $X \equiv_{vv'} Y$; therefore our sentence is true in $\M_\ell$, as required. 

Conversely, let us suppose that the sentence is true in $\M_\ell$. Then there exists a team $Y$ such that $Y \equiv_{vv'}  \{\varepsilon\}[\tuple M/nn']_{|N(n) \wedge N(n')}$ and that $Y$ satisfies the part of our expression that follows $\exists v \exists v'$. Then for
\begin{align*}
I &= \{i \in 1 \ldots \ell: \exists s \in Y, s(n) = i, s(v) = 1\}; \\
J &= \{i \in 1 \ldots \ell: \exists s \in Y, s(n) = i, s(v) \not = 1\};\\
I' &= \{i \in 1 \ldots \ell: \exists s \in Y, s(n') = i, s(v') = 1\};\\
J' &= \{i \in 1 \ldots \ell: \exists s \in Y, s(n') = i, s(v') \not = 1\}
\end{align*}
We have that
\begin{itemize}
\item $1 \in I \backslash J$ and $\ell \in J \backslash I$, because $\M_\ell \models_Y (n=\textbf 1 \hookrightarrow v=\textbf 1) \wedge (n=\textbf{end} \hookrightarrow v\not =\textbf 1)$; 
\item $I \cap J = \emptyset$: indeed, for
\[
    Z = Y[\tuple M / \tuple z]_{|(v = \textbf 1 \wedge Q n \tuple z) \vee (v \not = \textbf 1 \wedge Tn\tuple z)},
\]
we have that 
\begin{align*}
Z(\tuple z) &= \bigcup \{R \cup Q_i : \exists s \in Y, s(n) = i, s(v) = 1\} \cup\\
&~~~~\bigcup \{R \cup T_i : \exists s \in Y, s(n) = i, s(v) \not = 1\}\\
&= R_{I, J}
\end{align*}

and so by hypothesis if $I \cap J \not = \emptyset$ then  $(M_\ell, Z(\tuple z)) \not \in \D$, while the formula states that $\M_\ell \models_Z \D \tuple z$; 
\item $I' \cap J' = \emptyset$: similar argument as in the previous point;
\item $I = I'$: indeed, if $i \in I$ then there exists some $s \in Y$ with $s(nv) = i1$. But then if we choose some $s' \in Y$ with $s'(nn') = ii$ (which must exist because $Y \equiv_{vv'}  \{\varepsilon\}[\tuple M/nn']_{|N(n) \wedge N(n')}$) we must still have that $s'(v) = 1$, because otherwise we would have that $i \in I \cap J$ and as we saw this is impossible. But as $\M_\ell \models_Y n = n' \hookrightarrow v = v'$ this has a consequence that $s'(v') = s'(v) = 1$ and so $i \in I'$. By a symmetric argument, if $i \in I'$ then $i \in I$, and so $I = I'$. 
\item $J = J'$: Similar to the previous point. If $i \in J$ then there is some $s \in Y$ with $s(n) = i, s(v) \not = 1$. Now consider an assignment $s' \in Y$ with $s(nn') = ii$: we must still have that $s'(v) \not= 1$ because otherwise $i \in I$ and as we saw $I \cap J = \emptyset$. But since $\M_\ell \models_Y n = n' \hookrightarrow v = v'$, we must have that $s'(v') = s'(v) \not = 1$, and so $i \in J'$. Similarly, we can see that if $i \in J'$ then $i \in J$. 
\item If $i \in I$ and $i < \ell$ then $i+1 \in J$. Indeed, if $i \in I$, there exists some $s \in Y$ with $s(nv) = i1$. Now consider any $s' \in Y$ with $s'(n) = i$ and $s'(n') = i+1$: it must be the case that $s'(v) = 1$ as well, because otherwise we would have that $i  \in I \cap J = \emptyset$. But $\M_\ell \models_Y E(n,n') \hookrightarrow ((v=\textbf 1 \hookrightarrow v' \not = \textbf 1) \wedge (v \not = \textbf 1 \hookrightarrow v' = \textbf 1))$, and therefore $s'(v') \not = 1$, i.e. $i+1 \in J' = J$. 
\item If $i \in J$ and $i < \ell$ then $i+1 \in I$: similar to the previous point. If $i \in J$, there exists some $s \in Y$ with $s(n) = i$, $s(v) \not = 1$. Now take some $s' \in Y$ with $s'(n) = i$, $s'(n') = i+1$: then $s'(v) \not = 1$ as well, since otherwise $i \in I \cap J = \emptyset$. But then $s'(v') = 1$, and so $i+1 \in I' = I$. 
\end{itemize}

Therefore, the set $I$ contains $1$; whenever $i \leq \ell-2$ and $i \in I$, $i+1 \in J$, and so $i+2  \in I$; and $\ell$ is not in $I$. This is possible only if $\ell$ is even, as required. 

Finally, we must verify that no first order sentence can be true in $\M_\ell$ if and only if $\ell$ is even.

To do so it suffices to show that, for all $n \in \mathbb N$ and for $\ell > 2^{n+1}$, no first order sentence of quantifier rank up to $n$ can tell apart the models $\M_{\ell}$ and $\M_{\ell+1}$, i.e. Duplicator has a winning strategy in the $n$-moves Ehrenfeucht-Fra\"iss\'e game $EF_n(\M_{\ell}, \M_{\ell+1})$. We can assume that this game starts with all elements of $A$ already played in $\M_{\ell}$ and answered with the same elements in $\M_{\ell+1}$, and that Spoiler will never choose elements of $A$ again in either model.

Let us proceed by induction on $n$: 

\begin{description}
\item[Base Case:] Let us consider the one-move game $EF_1(\M_{\ell}, \M_{\ell+1})$ where $\ell > 4$.  

If Spoiler plays in $\M_{\ell}$ and chooses an index $t \in \{1 \ldots \ell\}$ or an element $b \in B_t$ for some $t$, Duplicator can choose an index $t'$ that is $1$, $2$, $\ell$ or or $\ell + 1$ if and only if $t$ is $1,2, \ell-1$ or $\ell$ respectively, and then play $t'$ or $\mathfrak f_{t,t'}(b)$ respectively. 

If instead Spoiler plays in $\M_{\ell + 1}$ and chooses an index $t' \in \{1 \ldots \ell+1\}$ or an element $b' \in B_t$ for some such $t'$, Duplicator can likewise choose an index $t$ that is $1$, $2$, $\ell-1$ or $\ell$ if and only if $t'$ is $1$,$2$, $\ell$ or $\ell+1$ respectively, and then play $t$ or $\mathfrak f_{t,t'}^{-1}(b)$ respectively. 

In either case, Duplicator then wins the game: 
\begin{itemize}
\item The strategy only associates elements in $\{1 \ldots \ell\}$ to elements in $\{1\ldots \ell+1\}$ and vice versa, so the unary predicate $N$ is respected; 
\item The strategy associates endpoints to endpoints, so atomic formulas of the form $x = \textbf{1}$ or $x = \textbf{end}$ are respected; 
\item Atoms of the form $E(x,\textbf{1})$ or $E(\textbf{end},x)$ are never satisfied in either model, so there is nothing to check about them; 
\item Atoms of the form $E(\textbf{start},x)$ are satisfied (in $\M_\ell$ or $\M_{\ell+1}$) only if $x=2$, and this strategy maps $2$ in $\M_\ell$ to $2$ in $\M_{\ell+1}$ and vice versa; 
\item Atoms of the form $E(x, \textbf{end})$ are satisfied in $\M_\ell$ only if $x = \ell-1$ and in $\M_{\ell+1}$ only if $x = \ell$, and this strategy maps $\ell-1$ in $\M_{\ell}$ to $\ell$ in $\M_{\ell+1}$;
\item In one move it is not possible to pick two indexes that are one the successor of each other (note that the elements of $A$ never occur in the relation $E$), so atoms of the form $Exy$ are trivially respected; 
\item The $\mathfrak f$s are isomorphisms that keep $A$ fixed pointwise, so the relations $Q$ and $T$ are respected. 
\end{itemize}
\item[Induction Case]: Consider now the game $EF_{n+1}(\M_{\ell}, \M_{\ell+1})$ where $\ell > 2^{n+2}$. We must consider a few different cases depending on the first move of Spoiler:  
\begin{enumerate}
\item If Spoiler plays in $\M_\ell$ and picks an element $t \in \{1 \ldots 2^{n+1}+1\}$ or an element $b \in B_t$ for some $t \in \{1 \ldots 2^{n+1}+1\}$, answer with the same element in $\M_{\ell+1}$. Now, the submodels of these models that are associated to indexes no greater than $t$ are isomorphic to each other (their domains are both of the form $A \cup \{1 \ldots t\} \cup \bigcup \{B_1 \ldots B_t\}$), so for the rest of the game Duplicator can play along this isomorphism for these submodels. The parts of the two models that instead \emph{begin} from $t$ (included) are of the form $A \cup \{t \ldots \ell\} \cup \bigcup\{B_{t} \ldots B_{\ell}\}$ and $A \cup \{t \ldots \ell+1\} \cup \bigcup\{B_{t} \ldots B_{\ell+1}\}$ respectively;  but these are isomorphic to $\M_{\ell - t+1}$ and $\M_{\ell-t+2}$ respectively (it is just a matter of gluing together the various isomorphisms from $A \cup B_{t}$ to $A \cup B_1$, from $A \cup B_{t+1}$ to $A \cup B_2$ et cetera, which we can do because they all agree over $A$). But $\ell - t + 1> 2^{n+2} - (2^{n+1}+1) +1 = 2^{n+1}$, so by induction hypothesis Duplicator can survive for $n$ turns by playing elements between these submodels.\footnote{Note that both sub-strategies associate $t$ to $t$ and $B_t$ to $B_t$, so there is no conflict between them insofar as they overlap; and that no atomic formula holds between an element that is picked according to one substrategy and one that doesn't, so if Duplicator can win both subgames she can win the game.}
\item If Spoiler plays in $\M_{\ell + 1}$ and picks an element $t \in \{1 \ldots 2^{n+1}+1\}$ or an element $b \in B_t$ for some $t \in \{1 \ldots 2^{n+1}+1\}$, answer with the same element in $\M_{\ell}$. By the same argument used above, Duplicator can then survive for $n$ more turns. 
\item If Spoiler plays in $\M_\ell$ and picks an element $t \in \{2^{n+1}+2 \ldots \ell\}$, or an element $b \in B_t$ for some $t \in \{2^{n+1}+2 \ldots \ell\}$, answer with $t+1$ or $\mathfrak f_{t,t+1}(b)$ respectively. 

This time, the submodels associated to indexes starting from $t$ and $t+1$ respectively are isomorphic to each other (and to $A \cup \{1 \ldots \ell - t+1\} \cup \bigcup\{B_1 \ldots B_{\ell-t+1}\}$); and the ones associated to indexes up to $t$ are equal to $\M_{t}$ and $\M_{t+1}$ respectively, and since $t > 2^{n+1}$ by induction hypothesis Duplicator can survive for $n$ turns between these two submodels. 
\item If Spoiler plays in $\M_{\ell+1}$ and picks an element $t \in \{2^{n+1}+2 \ldots \ell+1\}$, or an element $b \in B_t$ for some $t \in \{2^{n+1}+2 \ldots \ell+1\}$, answer with $t-1$ or $\mathfrak f_{t-1,t}^{-1}(b)$ respectively. By the same argument used above, Duplicator can then survive for $n$ more turns.
\end{enumerate}
\end{description}
\qed
\end{proof}

\subsection*{The Characterization}
The following notions of U-sentences and U-embeddings are from \cite{galliani2022strongly}, in which they were used to characterize strongly first order union-closed dependencies:
\begin{defin}[U-sentences, $\Rrightarrow_U$]
Let $R$ be a $k$-ary relation symbol and let $\tuple a$ be a tuple of constant symbols. Then a first order sentence over the signature $\{R, \tuple a\}$ is a U-sentence if and only if it is of the form $\exists \tuple x (\eta(\tuple x) \wedge \forall \tuple y (R \tuple y \rightarrow \theta(\tuple x, \tuple y)))$, 
where $\tuple x$ and $\tuple y$ are disjoint tuples of variables without repetitions, $\eta(x)$ is a conjunction of first order literals over the signature $\{R, \tuple a\}$ in which $R$ occurs only positively,  and  $\theta(\tuple x, \tuple y)$ is a first order formula over the signature $\{\tuple a\}$  (i.e. in which $R$ does not appear). Given two models $\A$ and $\B$ with the same signature, we will write $\A \Rrightarrow_U \B$ if, for every $U$-sentence $\phi$, $\A \models \phi \Rightarrow \B \models \phi$. 
\end{defin}
\begin{propo}
U-sentences are closed by conjunction, i.e. if $\phi$ and $\psi$ are U-sentences then $\phi \wedge \psi$ is equivalent to some U-sentence. 
\label{propo:U-sent-conj}
\end{propo}
\begin{proof}
Let 
\[
    \phi := \exists \tuple x(\eta(\tuple x) \wedge \forall \tuple y(R \tuple y \rightarrow \theta(\tuple x, \tuple y)))
\]
and 
\[
    \psi := \exists \tuple z(\eta'(\tuple z) \wedge \forall \tuple w(R \tuple w \rightarrow \theta'(\tuple z, \tuple w))) 
\]
where, up to variable renaming, we can assume that $\tuple x \tuple y$ and $\tuple z \tuple w$ have no variables in common. 

Then $\phi \wedge \psi$ is equivalent to 
\[
    \exists \tuple x \tuple z( (\eta(\tuple x) \wedge \eta'(\tuple z)) \wedge \forall \tuple y(R \tuple y \rightarrow (\theta(\tuple x, \tuple y) \wedge \theta'(\tuple z, \tuple y))))
\]
which is of the required form. \qed
\end{proof}
\begin{propo}
Let $\phi = \exists \tuple x(\eta(\tuple x) \wedge \forall \tuple y(R \tuple y \rightarrow \theta(\tuple x, \tuple y)))$ be a U-sentence. Then there exists a $\FO(=\!\!(\cdot), \NE)$ formula $\phi'(\tuple y)$ over the empty signature, with free variables in $\tuple y$, such that $\M \models_X \phi'(\tuple y) \Leftrightarrow (M, X(\tuple y)) \models \phi$
for all $\M$ and $X$.
\label{Usent2CNE}
\end{propo}
\begin{proof}
Take $\phi'(\tuple y) := \exists \tuple x( =\!\!(\tuple x) \wedge \eta'(\tuple x) \wedge \theta(\tuple x, \tuple y))$,
where $\eta'$ is obtained from $\eta$ by replacing every atom $R \tuple z$ (for $\tuple z \subseteq \tuple x$) with $\tuple z = \tuple z  \vee (\NE(\tuple z) \wedge \tuple z = \tuple y)$. 

Let us verify that $\phi'$ satisfies the required condition. 

If $\M \models_X \phi'(\tuple y)$, there exists some $Y \equiv_{\tuple y} X$ such that $\M \models_Y =\!\!(\tuple x) \wedge \eta'(\tuple x, \tuple y) \wedge \theta(\tuple x, \tuple y)$. 
\begin{itemize}
\item Since $\M \models_Y =\!\!(\tuple x)$, there exists one tuple of elements $\tuple m$ such that $s(\tuple x) = \tuple m$ for all $s \in Y$; 
\item Since $\M \models_Y \eta'(\tuple x)$, for every literal $x_i = x_j$ occurring in $\eta$ we have that $m_i = m_j$, and likewise for literals in $\eta$ of the form $x_i \not = x_j$, $x_i = c$ or $c = d$ where $c$ and $d$ are constants; and for every literal of the form $R \tuple z$ for $\tuple z \subseteq \tuple x$, we have that 
\[
    M \models_Y (\tuple z = \tuple z) \vee (\NE(\tuple z) \wedge \tuple z = \tuple y)
\]
and so $Y = Y_1 \cup Y_2$ where $Y_2 \not = \emptyset$ and $s(\tuple z) = s(\tuple y) \in Y_2(\tuple y) \subseteq Y(\tuple y) = X(\tuple y)$ for all $s \in Y_2$. So $\tuple z[\tuple m/\tuple x] = s(\tuple z) \in X(\tuple y)$, as required, and in conclusion $(M, X(\tuple y)) \models \eta(\tuple m)$; 
\item Consider any $\tuple a \in X(\tuple y) = Y(\tuple y)$. Since $s(\tuple x) = \tuple m$ for all $s \in Y$, there exists some $s \in Y$ with $s(\tuple x \tuple y) = \tuple m \tuple a$; and since $\M \models_Y \theta(\tuple x, \tuple y)$, by Proposition \ref{propo:flat} we have that $\theta(\tuple m, \tuple a)$. Therefore, $(M, X(\tuple y)) \models \forall \tuple y(R \tuple y \rightarrow \theta(\tuple m, \tuple y))$. 
\end{itemize}
So in conclusion $(M, X(\tuple y)) \models \exists \tuple x(\eta(\tuple x) \wedge \forall \tuple y(R \tuple y \rightarrow \theta(\tuple x, \tuple y)))$, as required. 

Conversely, suppose that $(M, X(\tuple y)) \models \exists \tuple x(\eta(\tuple x) \wedge \forall \tuple y(R \tuple y \rightarrow \theta(\tuple x, \tuple y)))$. Then there exists some $\tuple m$ such that  $(M, X(\tuple y)) \models \eta(\tuple m) \wedge \forall \tuple y(R \tuple y \rightarrow \theta(\tuple m, \tuple y))$. Now let $Y = X[\tuple m/\tuple x] = \{s[\tuple m/\tuple x] : s \in X\}$. Clearly $Y \equiv_{\tuple y} X$ and $\M \models_Y =\!\!(\tuple x)$. Additionally, $\M \models_Y \eta'(\tuple x)$: indeed, every identity literal occurring in $\eta$ occurs unchanged in $\eta'$ too, and if an atom $R \tuple z$ (for $\tuple z \subseteq \tuple x$) occurs in $\eta$ then there exists some $s \in Y$ with $s(\tuple y) = s(\tuple z)$, and so we can see that $\M \models_Y (\tuple z = \tuple z) \wedge (\NE(\tuple z) \wedge \tuple z = \tuple y)$ by splitting $Y$ as $Y = Y \cup \{s\}$. Finally, $\M \models_Y \theta(\tuple x, \tuple y)$ by Proposition \ref{propo:flat}, because for every $s \in Y$ we have that $s(\tuple y) \in X(\tuple y)$ and $s(\tuple x) = \tuple m$, and $\M \models \theta(\tuple m, s(\tuple y))$. 

This concludes the proof. 
\qed
\end{proof}
\begin{defin}[U-embedding]
A structure $(A, R)$ is said to be \emph{U-embedded} in a structure $(B, S)$ if 
\begin{enumerate}
\item $(A, R)$ is a substructure of $(B, S)$; 
\item For every finite tuple of parameters $\tuple a$ in $A$ and every first order formula $\theta(\tuple y, \tuple z)$ over the empty signature, \\$(A, R) \models \forall \tuple y (R \tuple y \rightarrow \theta(\tuple y, \tuple a)) \Rightarrow (B, S) \models \forall \tuple y (S \tuple y \rightarrow \theta(\tuple y, \tuple a))$.\\
If a structure $(A, R)$ is isomorphic to some structure $(A', R')$ which is U-embedded in $(B, S)$, we say that the isomorphism $\iota: (A, R) \rightarrow (A', R')$ is a \emph{U-embedding} of $(A, R)$ into $(B, S)$. 
\end{enumerate}
\end{defin}
The next proposition is proved as in the first part of Proposition 5 of \cite{galliani2022strongly}: 
\begin{propo}
Let $\A = (A, R)$ and let $\B = (B, S)$, where $R$ and $S$ are $k$-ary relations and $A$ is countably infinite, and suppose that $\A \Rrightarrow_U \B$. Then there exist an elementary extension $\B' \succeq\B$ and an U-embedding $\iota: \A \rightarrow \B'$.
\label{propo:U-embed}
\end{propo}
\begin{proof}
Let $\B'$ be an $\omega$-saturated elementary extension of $\B$ and let $(a_i)_{i \in \mathbb N}$ enumerate $A$. Then let us define by induction a sequence $(b_i)_{i \in \mathbb N}$ of elements of $B'$ such that 
\[
    (A, R, a_1 \ldots a_t) \Rrightarrow_U (B', S', b_1 \ldots b_t)
\]
for all $t \in \mathbb N$.
\begin{itemize}
\item \textbf{Base Case:} Since by hypothesis $(A, R) \Rrightarrow_U (B, S)$ and $(B', S')$ is an elementary extension of $(B, S)$, we have at once that $(A, R) \Rrightarrow_U (B', S')$. 
\item \textbf{Induction Case:} Suppose that $(A, R, a_1 \ldots a_t) \Rrightarrow_U (B', S', b_1 \ldots b_t)$. Now consider the element $a_{t+1}$ and the set of formulas 
\begin{align*}
    T(v) := \{&\exists \tuple x(\eta(S, \tuple x, b_1 \ldots b_t, v) \wedge \forall \tuple y(S \tuple y \rightarrow \theta(\tuple x, \tuple y, b_1 \ldots b_t, v))) : \\
    &(A, R) \models \exists \tuple x(\eta(R, \tuple x, a_1 \ldots a_t, a_{t+1}) \wedge \forall \tuple y(S \tuple y \rightarrow \theta(\tuple x, \tuple y, a_1 \ldots a_t, a_{t+1})))\}
\end{align*}
where $\eta$ ranges over conjunctions all first order literals in which the relation symbol $R$ occurs only positively and $\theta$ ranges over first order formulas in which $R$ does not occur at all. 

Then $T$ is finitely satisfiable: indeed, every finite subset of it is logically equivalent to a formula of the form 
\[
    \phi(S, b_1 \ldots b_t, v) = \exists \tuple x (\eta'(S, \tuple x, b_1 \ldots b_t, v) \wedge \forall \tuple y(S \tuple y \rightarrow \theta'(\tuple x, \tuple y, b_1 \ldots b_t, v)))
\]
such that $\eta'$ is a conjunction of literals in which $S$ appears only positively, $\theta'$ is a first order formula in which $S$ does not appear, and
\[
    (A, R) \models \phi(R, a_1 \ldots a_t, a_{t+1}).
\]
But then it is also the case that $(A, R) \models \exists v \phi(R, a_1 \ldots a_t, v)$; and since $\exists v \phi(R, a_1 \ldots a_t, v)$ is a U-sentence over the signature $\{R, a_1 \ldots a_t\}$, and\\$(A, R, a_1 \ldots a_t) \Rrightarrow_U (B, S, b_1 \ldots b_t)$, it follows that there exists some element $b' \in B'$ such that $(B, S) \models \phi(S, b_1 \ldots b_t, b')$. 

Therefore, since $\B'$ is $\omega$-saturated, there must exist some element $b_{t+1}$ such that $(B, S, b_1 \ldots b_t) \models T(b_{t+1})$; and therefore,\\$(A, R, a_1 \ldots a_{t}, a_{t+1}) \Rrightarrow_U (B, S, b_1 \ldots b_t, b_{t+1})$. 
\end{itemize}
Now let $\iota: A \rightarrow B'$ be such that $\iota(a_i) = b_i$ for all $i \in \mathbb N$. This $\iota$ is an injective function, since $a_i \not = a_j$ is a $U$-sentence for all $i, j \in \mathbb N$, and therefore we can let $(A', R') = \iota(A, R)$ be our isomorphic copy of $(A, R)$. It remains to show that it satisfies the required properties. 

Let $\tuple a = a_{i_1} \ldots a_{i_k}$ be a tuple of $k$ elements of $A$, and let $\tuple b = \iota(\tuple a) = b_{i_1} \ldots b_{i_k}$. If $\tuple a \in R$, then since $R \tuple a_{i_1} \ldots \tuple a_{i_k}$ is a U-sentence it must be the case that $\tuple b \in S$; and if instead $\tuple a \not \in R$, since $\forall \tuple y (R \tuple y \rightarrow (\tuple y \not = \tuple a))$ is a U-sentence it must be the case that $\tuple b \not \in S$. Therefore, $(A', R')$ is indeed a substructure of $(B, S)$.

Now suppose that $(A', R') \models \forall \tuple y(R' \tuple y \rightarrow \theta(\tuple y, \tuple{a'}))$ where $\tuple {a'} = (a'_{i_1} \ldots a'_{i_t})$ is a tuple of elements of $A' = \iota(A)$ and $\theta$ is first order formula over the empty signature. Then, for $\tuple a = (a_{i_1} \ldots a_{i_t})$, since $(A, R, \tuple a)$ and $(A', R', \tuple {a'})$ are isomorphic $(A, R) \models \forall \tuple y(R \tuple y \rightarrow \theta(\tuple y, \tuple a))$; and therefore, since this is a U-sentence, $(B', S') \models \forall \tuple y(S' \tuple y \rightarrow \theta(\tuple y, \tuple {a'}))$ as required. \qed
\end{proof}
\begin{coro}
Let $\A = (A, R)$ and let $\B = (B, S)$, where $R$ and $S$ are $k$-ary relations and $A$ is countably infinite, and suppose that $\A \Rrightarrow_U \B$. Then there exists a countably infinite structure $(B_0, S_0) \equiv (B, S)$ and an U-embedding $\iota: (A, R) \rightarrow (B_0, S_0)$. 
\label{coro:U-embed-count}
\end{coro}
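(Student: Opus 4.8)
The plan is to apply Proposition \ref{propo:U-embed} to obtain an elementary extension $\B' \succeq \B$ together with a U-embedding $\iota \colon \A \to \B'$, and then to shrink $\B'$ down to a countable model by means of the downward L\"owenheim--Skolem theorem while keeping both the image $\iota(A)$ and the U-embedding property intact.

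Concretely, write $(A', R') = \iota(\A)$ for the isomorphic copy of $\A$ that is U-embedded in $\B' = (B', S')$; since $A$ is countably infinite and $\iota$ is injective, $A'$ is a countably infinite subset of $B'$. The signature is countable (a single $k$-ary relation symbol), so by downward L\"owenheim--Skolem there is a countable elementary substructure $(B_0, S_0) \preceq \B'$ with $A' \subseteq B_0$. This $(B_0, S_0)$ is the model we want: it is infinite because it contains the infinite set $A'$, hence countably infinite; and since $(B_0, S_0) \preceq \B'$ and $\B' \succeq \B$, we have $(B_0, S_0) \equiv \B' \equiv (B, S)$, as required.

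It remains to check that $\iota \colon \A \to (B_0, S_0)$ is still a U-embedding, i.e. that $(A', R')$ is U-embedded in $(B_0, S_0)$. For the substructure clause, note that $S_0 = S' \cap B_0^k$ because $(B_0, S_0) \preceq \B'$, and $R' = S' \cap (A')^k$ because $(A', R')$ is a substructure of $\B'$; since $A' \subseteq B_0$ this gives $R' = S_0 \cap (A')^k$, so $(A', R')$ is a substructure of $(B_0, S_0)$. For the formula-transfer clause, take any tuple $\tuple{a'}$ of parameters in $A'$ and any first order $\theta(\tuple y, \tuple z)$ over the empty signature with $(A', R') \models \forall \tuple y (R' \tuple y \to \theta(\tuple y, \tuple{a'}))$. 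Because $\iota$ is a U-embedding into $\B'$ we get $(B', S') \models \forall \tuple y (S' \tuple y \to \theta(\tuple y, \tuple{a'}))$; and since $\tuple{a'}$ lies in $A' \subseteq B_0$ and $(B_0, S_0) \preceq \B'$, this very sentence (whose parameters are all in $B_0$) transfers down to $(B_0, S_0)$, yielding $(B_0, S_0) \models \forall \tuple y (S_0 \tuple y \to \theta(\tuple y, \tuple{a'}))$.

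The only point requiring care -- and the conceptual heart of the argument -- is that the defining conditions of a U-embedding survive the passage to the elementary substructure. This works precisely because each such condition is a single first order sentence $\forall \tuple y (S \tuple y \to \theta(\tuple y, \tuple{a'}))$ whose parameters were arranged to lie inside $B_0$, so the relation $(B_0, S_0) \preceq \B'$ moves it down faithfully; everything else is the routine bookkeeping above. I do not expect any genuine obstacle here, since Proposition \ref{propo:U-embed} already does the substantive work of producing the U-embedding, and this corollary merely extracts a countable witness from it.
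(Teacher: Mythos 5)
Your proposal is correct and follows essentially the same route as the paper: apply Proposition \ref{propo:U-embed} to get the U-embedding into an elementary extension, then use downward L\"owenheim--Skolem (the paper does this by adding the elements of $\iota(A)$ as constants, you by requiring $A' \subseteq B_0$ directly -- same thing) and observe that the U-embedding conditions, being first order sentences with parameters in $B_0$, transfer down along $(B_0,S_0) \preceq \B'$. Your explicit verification of the substructure clause is a detail the paper leaves implicit, but there is no substantive difference.
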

\begin{proof}
By Proposition \ref{propo:U-embed}, we can find a (possibly uncountable) $(B', S') \succeq (B, S)$ and an U-embedding $\iota: (A, R) \rightarrow (B', S')$. Let $(A', R')$ be the isomorphic image of $(A, R)$ along $\iota$. Now, $A'$ is countable; therefore, by L\"owenheim-Skolem applied to $(B',S', (a')_{a' \in A'})$, there exists some countably infinite elementary substructure $(B_0, S_0, (a')_{a' \in A'}) \preceq (B', S', (a')_{a' \in A'})$ that still contains $(A', R')$ as a substructure. $(A', R')$ is U-embedded in $(B_0, S_0)$: indeed, if $(A', R') \models \forall \tuple y(R' \tuple y \rightarrow \theta(\tuple y, \tuple {a'}))$ then $(B', S') \models \forall \tuple y(S \tuple y \rightarrow \theta(\tuple y, \tuple {a'}))$, and so $(B_0, S_0) \models \forall \tuple y(S_0 \tuple y \rightarrow \theta(\tuple y, \tuple {a'}))$ as well. Therefore $\iota$ is also a U-embedding of $(A, R)$ into $(B_0, S_0)$, as required. \qed
\end{proof}

\begin{lem}
Let $(A, R)$ be countably infinite and U-embedded in $(B, S)$, and let $\tuple b \in (S \backslash R)^k$. Also, let $\tuple a$ list some finite $C$ such that $A \cap \Rng(\tuple b)\subseteq C \subseteq A$, and let $\tau(\tuple x, \tuple y)$ be the identity type of $\tuple b \tuple a$. Then there are infinitely many $\tuple a^{(1)}, \tuple a^{(2)}, \ldots$ in $R$ such that
\begin{enumerate}
\item All tuples $\tuple a^{(q)}$ satisfy $\tau(\tuple a^{(q)}, \tuple a)$; 
\item If $q \not = q'$ then $\Rng(\tuple a^{(q)}) \cap \Rng(\tuple a^{(q')}) \subseteq \Rng(\tuple a)$. 
\end{enumerate}
\label{lemma:infbi}
\end{lem}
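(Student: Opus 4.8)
The plan is to build the tuples $\tuple a^{(1)}, \tuple a^{(2)}, \ldots$ one at a time by induction on $q$, using at each stage the defining property of U-embedding in its contrapositive form. Recall that, since $(A,R)$ is U-embedded in $(B,S)$, for every finite tuple of parameters $\tuple c$ in $A$ and every first order $\theta(\tuple y, \tuple z)$ over the empty signature we have the implication $(A,R) \models \forall \tuple y(R\tuple y \rightarrow \theta(\tuple y, \tuple c)) \Rightarrow (B,S) \models \forall \tuple y(S\tuple y \rightarrow \theta(\tuple y, \tuple c))$; equivalently, if $S$ contains a tuple violating $\theta(\cdot, \tuple c)$ then so does $R$. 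This is the only direction of information transfer I will need, and it runs exactly the right way for us: a witness in the larger structure $(B,S)$ forces a witness in the smaller $(A,R)$.

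The key observation driving the construction is that $\tuple b$ barely touches $A$. By hypothesis $A \cap \Rng(\tuple b) \subseteq \Rng(\tuple a)$, so every coordinate of $\tuple b$ is either an element of $\Rng(\tuple a)$ or else lies outside $A$ altogether. Consequently, if $\tuple d = d_1 \ldots d_m$ is \emph{any} finite list of elements of $A \setminus \Rng(\tuple a)$, then $\tuple b$ avoids all of them: each coordinate $b_j$ is either in $\Rng(\tuple a)$, hence distinct from every $d_i$ since $d_i \notin \Rng(\tuple a)$, or outside $A$, hence distinct from every $d_i \in A$. Moreover $\tuple b$ satisfies $\tau(\tuple b, \tuple a)$ by the definition of $\tau$ as the identity type of $\tuple b \tuple a$. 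Thus $\tuple b$ simultaneously realizes the required identity type and dodges any prescribed finite subset of $A \setminus \Rng(\tuple a)$.

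Now suppose inductively that I have produced $\tuple a^{(1)}, \ldots, \tuple a^{(q)} \in R$, all satisfying $\tau(\cdot, \tuple a)$ and pairwise disjoint except on $\tuple a$, and let $\tuple d = d_1 \ldots d_m$ list the finitely many elements occurring in $\tuple a^{(1)} \ldots \tuple a^{(q)}$ but not in $\tuple a$; these lie in $A \setminus \Rng(\tuple a)$, since each $\tuple a^{(p)} \in R \subseteq A^k$. Consider the formula over the empty signature
\[
\theta(\tuple y, \tuple a, \tuple d) \;:=\; \lnot\left(\tau(\tuple y, \tuple a) \wedge \bigwedge_{j=1}^{k}\bigwedge_{i=1}^{m} y_j \neq d_i\right).
\]
By the observation above $\tuple b \in S$ violates $\theta(\cdot, \tuple a, \tuple d)$, so $(B,S) \not\models \forall \tuple y(S\tuple y \rightarrow \theta(\tuple y, \tuple a, \tuple d))$; since $\theta$ is over the empty signature and its parameters $\tuple a, \tuple d$ lie in $A$, the contrapositive of U-embedding yields some $\tuple a^{(q+1)} \in R$ with $\tau(\tuple a^{(q+1)}, \tuple a)$ and $a^{(q+1)}_j \neq d_i$ for all $i,j$. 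Avoiding the $d_i$ means $\tuple a^{(q+1)}$ shares with each earlier $\tuple a^{(p)}$ only elements of $\Rng(\tuple a)$, so the enlarged family is again pairwise disjoint except on $\tuple a$. The base case $q=0$ is the same argument with $\tuple d$ empty. The process never stalls because each step is answered by the fixed witness $\tuple b$; and since $\tuple b \in (S\setminus R)^k$ while $R = S \cap A^k$, the tuple $\tuple b$ cannot lie in $A^k$, so $\tau$ forces at least one coordinate outside $\Rng(\tuple a)$, whence each $\tuple a^{(q)}$ contributes a fresh element and (using that $A$ is infinite) the iteration continues through all $q \in \mathbb{N}$.

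I expect the only genuinely substantive step to be the second paragraph: recognizing that the hypothesis $A \cap \Rng(\tuple b) \subseteq \Rng(\tuple a)$ is \emph{precisely} what lets $\tuple b$ serve as a uniform $S$-side witness for the avoidance conditions, and packaging "disjoint from the previously chosen tuples except on $\tuple a$" as a first order inequality condition over the empty signature with parameters in $A$. Once that is in place, each inductive step is a direct application of the U-embedding property, and the verification of clauses (1) and (2) is routine.
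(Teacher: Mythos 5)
Your proof is correct and is essentially the paper's argument: both hinge on transferring, via the U-embedding property, a universal statement of the form ``every $R$-tuple of identity type $\tau(\cdot,\tuple a)$ meets a prescribed finite set of elements of $A\setminus\Rng(\tuple a)$,'' and refuting it in $(B,S)$ with the single witness $\tuple b$, which avoids all of $A$ outside $\Rng(\tuple a)$. The only difference is presentational -- you run a greedy inductive construction where the paper argues by contradiction from a finite maximal pairwise-disjoint family -- and your extra remark that $\tau$ forces a coordinate outside $\Rng(\tuple a)$ (so the tuples produced are genuinely distinct) is a welcome detail the paper leaves implicit.
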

\begin{proof}
Suppose that this is not the case: then there exist in $(A, R)$ a finite number of tuples $\tuple a^{(1)} \ldots \tuple a^{(q)}$ such that all $\tuple d \in R$ that satisfy $\tau(\tuple d, \tuple a)$, intersect one of them somewhere other than in $\tuple a$. So, if $\tuple c$ lists $\bigcup_{j = 1}^q \Rng(\tuple a^{(j)}) \backslash \Rng(\tuple a)$,
$(A, R) \models \forall \tuple y (R \tuple y \rightarrow (\tau(\tuple y, \tuple a)\rightarrow \tuple y \cap \tuple c \not = \emptyset))$ 
where $\tuple y \cap \tuple c \not = \emptyset$ is a shorthand for $\bigvee_{i,j} y_i = c_j$. Then, by the definition of U-embedding, $(B, S) \models \forall \tuple y (S \tuple y \rightarrow (\tau(\tuple y, \tuple a)\rightarrow \tuple y \cap \tuple c \not = \emptyset))$; and this is impossible, because $\tuple b$ is disjoint from $A$ except on $\tuple a$. \qed
\end{proof}
\begin{propo}
Let $(A, R)$ be countably infinite and U-embedded in $(B, S)$, let $t \in \mathbb N$, let $\tuple b_1, \ldots, \tuple b_t \in S \backslash R$, and suppose that $(A, R) \in \D$ where $\D$ is a domain-independent, strongly first order DED. Then $(B, R \cup \{\tuple b_1 \ldots \tuple b_t\}) \in \D$. 
\label{propo:noTuples}
\end{propo}
\begin{proof}
Suppose that this is not the case. Then let $\tuple a$ list $A \cap \bigcup_{i=1}^t \Rng(\tuple b_i)$, and for each $i=1\ldots t$ let $\tau_i(\tuple z, \tuple w)$ be the identity type of $\tuple b_i \tuple a$. Because of Lemma \ref{lemma:infbi}, $R$ contains infinitely many copies of each $\tuple b_i$, all satisfying the same identity types with $\tuple a$ and disjoint from any other copy of the same $\tuple b_i$ except over $\tuple a$. Now, for all $q \in \mathbb N$, let $B_q$ be a isomorphic, disjoint copy of $B\backslash A$, and let us identify $B_1$ with $B\backslash A$ itself; and for every $q$, let $(\tuple b_1^{(q)} \ldots \tuple b_t^{(q)})$ be the copy of $(\tuple b_1 \ldots \tuple b_t)$ in $A \cup B_q$. Finally, let $C = A \cup \bigcup_q B_q$. 
%

By Corollary \ref{coro:add_bqs}, $(C, R \cup \{\tuple b_i^{(q)} : q \in \mathbb N\}) \in \D$ for all $i \in 1 \ldots t$.

On the other hand, it cannot be that, for $R' = R \cup \{\tuple b_1^{(q)}, \dots, \tuple b_t^{(q)} : q \in \mathbb N\}$, $(C, R') \in \D$: indeed, otherwise by Corollary \ref{coro:DEDhom} we would have that $(C, R \cup \{\tuple b_1^{(1)} \ldots \tuple b_t^{(1)}\}) \in \D$ and so by domain independence $(B, R \cup \{b_1 \ldots b_t\}) \in \D$.

Therefore, there must exist a minimal $r \in 2 \ldots t$ such that, for $R' = R \cup \{\tuple b_1^{(q)} \ldots \tuple b_r^{(q)} : q \in \mathbb N\}$, $(C, R') \not \in \D$. Then let $\mathfrak g: \mathbb N \times \mathbb N \rightarrow \mathbb N$ be any bijection from $\mathbb N \times \mathbb N$ to $\mathbb N$ and,  for all $n \in \mathbb N$, let $Q_n = \{\tuple b_1^{\mathfrak g(n, q')} \ldots \tuple b_{r-1}^{\mathfrak g(n, q')}: q' \in \mathbb N\}$ and $T_n = \{\tuple b_r^{\mathfrak g(n, q')} : q' \in \mathbb N\}$. Let us see if we can apply Proposition \ref{propo:DED_RQT}.
%

By construction, it is clear there exist isomorphisms 
\[
\mathfrak f_{n,n'}: (A \cup \bigcup_{q' \in \mathbb N} B_{\mathfrak g(n,q')}, Q_n, T_n) \rightarrow (A \cup \bigcup_{q' \in \mathbb N} B_{\mathfrak g(n',q')}, Q_{n'}, T_{n'})
\]
that keep $A$ fixed pointwise. Now let $R_{I, J} = R \cup \bigcup_{i \in I} Q_i \cup \bigcup_{j \in J} T_j$ for $I, J \subseteq \mathbb N$. 
\begin{itemize}
\item If $I \cap J \not = \emptyset$ then $(C, R_{I,J}) \not \in \D$: otherwise, since $Q_i \cup T_i \subseteq R_{I,J}$ for some $i$, by Corollary \ref{coro:DEDhom} $(C, R \cup Q_i \cup T_i) \in \D$, which is impossible because this is isomorphic to $(C, R \cup \{\tuple b_1^q, \ldots, \tuple b_r^q: q \in \mathbb N\})$. 
\item If $I \cap J = \emptyset$ then $(C, R_{I, J}) \in \D$. Indeed, consider $R_I = R \cup \bigcup_{i \in I} Q_i$. $\bigcup_{i \in I} Q_i$ is a countably infinite set of copies of $b_1 \ldots b_{r-1}$, disjoint from each other and from $A$ outside of $\tuple a$; therefore, because of the minimality of $r$ we have that $(C, R_I) \in \D$. Now let $A' = A \cup \bigcup \{B_{\mathfrak g(n,q')} : n \in I, q' \in \mathbb N\}$. By the domain independence of $\D$, $(A', R_I) \in \D$; and by construction and by the fact that $I \cap J = \emptyset$, all tuples in $\bigcup_{j \in J} T_j$ are disjoint from $A'$ except on $\tuple a$. 

Since $R_I \supseteq R$, $R_I$ contains already infinitely many copies of $\tuple b_r$ disjoint from each other apart from $\tuple a$; and so, by Corollary \ref{coro:add_bqs}, $(C, R_{I,J}) \in \D$.

\end{itemize}
Thus, it is not possible for $\D$ to be strongly first order, which contradicts our hypothesis; and therefore, it must be the case that $(B, R \cup \{\tuple b_1 \ldots \tuple b_t\}) \in \D$. \qed
\end{proof}
\begin{coro}
Let $\D$ be a domain-independent, strongly first order DED, and suppose that $(A, R) \in \D$ is countably infinite and that $(A, R) \Rrightarrow_U (B, S)$. Then $(B, S) \in \D$. 
\label{coro:U-embed-preserve}
\end{coro}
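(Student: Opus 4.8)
The plan is to reduce to a countable model that is elementarily equivalent to $(B,S)$ and into which $(A,R)$ genuinely U-embeds, then to build $S$ up one tuple at a time out of $R$ via Proposition \ref{propo:noTuples}, and finally to close off with the chain-preservation property of DEDs.

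First I would invoke Corollary \ref{coro:U-embed-count}: since $(A,R) \Rrightarrow_U (B,S)$ and $A$ is countably infinite, there is a countably infinite $(B_0, S_0) \equiv (B,S)$ together with a U-embedding $\iota : (A,R) \to (B_0, S_0)$. Writing $(A', R')$ for the isomorphic image of $(A,R)$ under $\iota$, we have that $(A', R')$ is U-embedded in $(B_0, S_0)$, is countably infinite, and lies in $\D$ (since $\D$ is closed under isomorphism and $(A,R) \in \D$). Because $(A', R')$ is a substructure of $(B_0, S_0)$, we have $R' = S_0 \cap (A')^k \subseteq S_0$, so the only tuples of $S_0$ not already in $R'$ are those in $S_0 \setminus R'$.

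Next, since $B_0$ is countable, $S_0 \setminus R'$ is a countable set; enumerate it as $\tuple b_1, \tuple b_2, \ldots$ (the list may be finite or even empty). For each finite initial segment $\tuple b_1, \ldots, \tuple b_t$, Proposition \ref{propo:noTuples} — applied with $(A', R')$ in place of $(A,R)$ and $(B_0, S_0)$ in place of $(B,S)$, using that $\tuple b_1, \ldots, \tuple b_t \in S_0 \setminus R'$ and $(A', R') \in \D$ — yields $(B_0, R' \cup \{\tuple b_1, \ldots, \tuple b_t\}) \in \D$. If $S_0 \setminus R'$ is finite this already gives $(B_0, S_0) \in \D$ directly, and if it is empty then $S_0 = R'$ and domain-independence gives $(B_0, R') \in \D$ from $(A', R') \in \D$. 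If $S_0 \setminus R'$ is infinite, the relations $R_t := R' \cup \{\tuple b_1, \ldots, \tuple b_t\}$ form an ascending chain inside $B_0$, each $(B_0, R_t)$ belonging to $\D$, whose union is exactly $R' \cup (S_0 \setminus R') = S_0$; so by Proposition \ref{propo:DEDuc} (DEDs are preserved under unions of chains) we conclude $(B_0, S_0) \in \D$.

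Finally, $\D$ is a DED and hence defined by a first order sentence $\D(R)$; since $(B_0, S_0) \equiv (B, S)$, elementary equivalence transfers $(B_0, S_0) \models \D$ to $(B, S) \models \D$, that is $(B, S) \in \D$, as required. The substantive work has all been pushed into Proposition \ref{propo:noTuples}, so the only delicate points here are (i) passing from the abstract relation $\Rrightarrow_U$ to a genuine U-embedding into a \emph{countable} model — which is precisely what makes the tuple-by-tuple enumeration and the chain argument possible — and (ii) checking that the union of the chain recovers all of $S_0$ rather than a proper subrelation, which follows from $R' \subseteq S_0$ together with the exhaustive enumeration of $S_0 \setminus R'$.
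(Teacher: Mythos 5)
Your proof is correct and follows essentially the same route as the paper's: both pass to a countable $(B_0,S_0)\equiv(B,S)$ via Corollary \ref{coro:U-embed-count}, apply Proposition \ref{propo:noTuples} to finite sets of tuples from $S_0\setminus R'$, and use closure under unions of chains (Proposition \ref{propo:DEDuc}). The only difference is presentational — the paper argues by contradiction (if $(B_0,S_0)\notin\D$, chain-closure forces some finite stage $(B_0, R'\cup\{\tuple b_1\ldots\tuple b_t\})\notin\D$, contradicting Proposition \ref{propo:noTuples}), whereas you run the same argument directly and handle the finite and empty cases explicitly.
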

\begin{proof}
Suppose that this is not the case. Then by Corollary \ref{coro:U-embed-count} there is  a countable model $(B_0, S_0) \equiv (B, S)$ and an U-embedding $\iota$ of $(A, R)$ into $(B_0, S_0) \not \in \D$. Let $(A_0, R_0)$ be the image of $(A, R)$ along $\iota$, and let $(\tuple b_j)_{j \in \mathbb N}$ enumerate $S_0 \backslash R_0$. Since $\D$ is domain-independent, we have that $(B_0, R_0) \in \D$; and since by Proposition \ref{propo:DEDuc} $\D$ is closed by unions of chains and $(B_0, S_0) \not \in \D$, there exists some $t \in \mathbb N$ such that $(B_0, R_0 \cup \{\tuple b_1 \ldots \tuple b_t\}) \not \in \D$. This contradicts Proposition \ref{propo:noTuples}. \qed
\end{proof}
\begin{propo}
Let $\D$ be a domain-independent, strongly first order DED and suppose that $(A, R) \in \D$. Then there exists a $U$-sentence $\phi$ such that 
\begin{itemize}
\item $(A, R) \models \phi$; 
\item $\phi \models \D(R)$. 
\end{itemize}
\label{propo:char_local}
\end{propo}
\begin{proof}
If $(A, R)$ is finite then it is easy to find a U-sentence that fixes R up to isomorphism (just list all tuples in $R$ and state that the relation contains precisely them) and that therefore, by the domain independence of $\D$, entails $\D(R)$. Otherwise, by L\"owenheim-Skolem we can assume that $(A, R)$ is countably infinite. Now let $(B_i, S_i)_{i \in I}$ list all countable models such that $(B_i, S_i) \not \in \D$. For all $i \in I$, it cannot be the case that $(A, R) \Rrightarrow_U (B_i, S_i)$: otherwise, by Corollary \ref{coro:U-embed-preserve} we would have that $(B_i, S_i) \in \D$ as well. So there exists some U-sentence $\phi_i(R)$ such that $(A, R) \models \phi_i(R)$ but $(B_i, S_i) \not \models \phi_i(S_i)$.  Now consider the theory $\{\phi_i(R) : i \in I\} \cup \{\lnot \D(R)\}$. 
This theory is unsatisfiable: if it had a model, by L\"owenheim-Skolem it should have a countable model - i.e. some $(B_i, S_i)$, which cannot be true because $(B_i, S_i) \not \models \phi_i(S_i)$. Thus, by compactness, there must exist some finite subtheory of it that is already unsatisfiable. Therefore, for some finite subset $I_0$ of $I$ we have that $(A, R) \models \bigwedge_{i \in I_0} \phi_i(R)$ and that $\bigwedge_{i \in I_0} \phi_i(R) \models \D(R)$; and since by Proposition \ref{propo:U-sent-conj} U-sentences are closed by conjunction, the result follows.  \qed
\end{proof}
\begin{propo}
Let $\D$ be a strongly first order, domain-independent DED. Then $\D(R)$ is logically equivalent to a finite disjunction of U-sentences. 
\label{propo:char_global}
\end{propo}
\begin{proof}
Let $(A_i, R_i)_{i \in I}$ list all countable models such that $(A_i, R_i) \in \D$. Then, by Proposition \ref{propo:char_local}, for every $i$ there exists some U-sentence $\phi_i(R)$ such that $(A_i, R_i) \models \phi_i(R_i)$ and $\phi_i(R) \models \D(R)$. Now consider the theory $\{\lnot \phi_i(R): i \in I\} \cup \{\D(R)\}$: 
this theory is unsatisfiable, because if it had a model it would need to have a countable model -- i.e. one of the $(A_i, R_i)$. So there is a finite subset $I_0 \subseteq I$ such that  $\D(R) \models \bigvee_{i \in I_0} \phi_{i}$, and so $\D(R)$ is equivalent to $\bigvee_{i \in I_0} \phi_{i}$. \qed
\end{proof}
\begin{thm}
Let $\D$ be a domain-independent DED. Then the following are equivalent:
\begin{enumerate}
\item $\D$ is strongly first order; 
\item $\D$ is logically equivalent to a finite disjunction of U-sentences; 
\item $\D(R)$ is definable in $\FO(=\!\!(\cdot), \NE, \sqcup)$. 
\end{enumerate}
\label{theo:SFO_DED_Chara}
\end{thm}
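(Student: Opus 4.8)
The plan is to prove the three statements equivalent by establishing the cycle $(1) \Rightarrow (2) \Rightarrow (3) \Rightarrow (1)$. Crucially, all of the genuine work has already been carried out in the preceding propositions, so the proof of the theorem itself amounts to assembling them. The implication $(1) \Rightarrow (2)$ requires nothing new, since it is exactly the statement of Proposition \ref{propo:char_global}.

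For $(2) \Rightarrow (3)$ I would argue as follows. Assume $\D(R)$ is logically equivalent to a finite disjunction $\bigvee_{i=1}^n \phi_i$ of U-sentences over the signature $\{R\}$. Proposition \ref{Usent2CNE} associates to each $\phi_i$ a formula $\phi_i'(\tuple y) \in \FO(=\!\!(\cdot), \NE)$ over the empty signature satisfying $\M \models_X \phi_i'(\tuple y) \Leftrightarrow (M, X(\tuple y)) \models \phi_i$. The claim is then that the atom $\D \tuple y$ is equivalent to the global disjunction $\phi_1'(\tuple y) \sqcup \cdots \sqcup \phi_n'(\tuple y)$. This is a short chain of equivalences: $\M \models_X \D \tuple y$ holds iff $(M, X(\tuple y)) \models \D(R)$, iff $(M, X(\tuple y)) \models \phi_i$ for some $i$ (using that $\D$ is first order, so the DED acts on $X(\tuple y)$ through ordinary Tarskian satisfaction of the disjunction), iff $\M \models_X \phi_i'(\tuple y)$ for some $i$, iff $\M \models_X \bigsqcup_i \phi_i'(\tuple y)$ by the semantics of $\sqcup$. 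Hence $\D$ is definable in $\FO(=\!\!(\cdot), \NE, \sqcup)$.

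For $(3) \Rightarrow (1)$ the first step is to check that the base logic $\FO(=\!\!(\cdot), \NE, \sqcup)$ is itself no more expressive than $\FO$. Since the nonemptiness atom $\NE$ is a first order upwards closed dependency, Theorem \ref{theo:upcl_sfo} gives $\FO(=\!\!(\cdot), \NE) \equiv \FO$, and Theorem \ref{thm:sqcup_safe} then shows that adjoining the global disjunction is harmless, so that $\FO(=\!\!(\cdot), \NE, \sqcup) \equiv \FO$. If $\D$ is definable in this logic, Proposition \ref{propo:def_expr} yields $\FO(\D) \leq \FO(=\!\!(\cdot), \NE, \sqcup) \equiv \FO$, whence $\FO(\D) \equiv \FO$ and $\D$ is strongly first order, closing the cycle.

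The main obstacle lies not in the theorem proper but in the implication $(1) \Rightarrow (2)$, already discharged by Proposition \ref{propo:char_global}: its proof rests on the entire apparatus of U-embeddings together with the ``no-jump'' compactness arguments of Propositions \ref{propo:noChain}, \ref{propo:noDjump} and \ref{propo:noTuples} and the even-cardinality construction of Proposition \ref{propo:DED_RQT}. The two remaining links are, by comparison, routine: $(2) \Rightarrow (3)$ is a mechanical translation from the U-sentence normal form into team-semantic connectives via Proposition \ref{Usent2CNE}, and $(3) \Rightarrow (1)$ merely combines the known safety of constancy atoms, nonemptiness atoms and the global disjunction. The one point deserving care in $(2) \Rightarrow (3)$ is that the global disjunction $\sqcup$, rather than the ordinary team-semantic disjunction $\vee$, is precisely what is needed to mirror a Boolean disjunction of sentences at the level of teams, since $\vee$ would instead split the team and fail to capture $\D \tuple y$.
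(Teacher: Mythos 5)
Your proposal is correct and follows exactly the same route as the paper's proof: $(1)\Rightarrow(2)$ is Proposition \ref{propo:char_global}, $(2)\Rightarrow(3)$ translates each U-sentence via Proposition \ref{Usent2CNE} and joins the results with $\sqcup$, and $(3)\Rightarrow(1)$ combines Proposition \ref{propo:def_expr} with Theorems \ref{theo:upcl_sfo} and \ref{thm:sqcup_safe}. The added remark on why $\sqcup$ rather than $\vee$ is needed is a correct clarification but does not change the argument.
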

\begin{proof}$\\$
\begin{description}
\item[1. $\rightarrow$ 2.] This is Proposition \ref{propo:char_global}. 
\item[2. $\rightarrow$ 3.] 
Let $\D(R) = \bigvee_{i=1}^n \phi_i$ be a disjunction of U-sentences. 

For every $i$, let $\phi'_i(\tuple y)$ be the translation of $\phi_i$ in $\FO(=\!\!(\cdot), \NE)$ as per Proposition \ref{Usent2CNE}. Then $\D(R)$ is definable by the $\FO(=\!\!(\cdot), \NE, \sqcup)$ formula $\phi'(\tuple y) = \bigsqcup_{i =1}^n \phi'_i(\tuple y)$ : indeed, $\M \models_X \D \tuple y$ if and only if $(M, X(\tuple y)) \in \D$, that is if and only if $(M, X(\tuple y)) \models \phi_i$ for some $i = 1 \ldots n$, that is if and only if $\M \models_X \phi'_i(\tuple y)$ for some such $i$, that is if and only if $\M \models_X \phi'(\tuple y)$. 
\item[3. $\rightarrow$ 1.] Suppose that $\D$ is definable in $\FO(=\!\!(\cdot), \NE, \sqcup)$. Then, by Proposition \ref{propo:def_expr}, $\FO(\D) \leq \FO(=\!\!(\cdot), \NE, \sqcup)$. By Theorem \ref{theo:upcl_sfo}, $\FO(=\!\!(\cdot), \NE) \equiv \FO$; so, by Theorem \ref{thm:sqcup_safe}, $\FO(=\!\!(\cdot), \NE, \sqcup) \equiv \FO$, and therefore $\FO \leq \FO(\D) \leq \FO(=\!\!(\cdot), \NE, \sqcup) \equiv \FO$ and thus $\FO(\D) \equiv \FO$. 
\end{description}\qed
\end{proof}
\begin{coro}
Let $\DD$ be a family of domain-independent DEDs. Then $\FO(\DD) \equiv \FO$ if and only if every $\D \in \DD$, taken individually, is strongly first order. 
\end{coro}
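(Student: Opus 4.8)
The plan is to derive this corollary directly from the main characterization theorem (Theorem \ref{theo:SFO_DED_Chara}) combined with the safety results already established for the base logic $\FO(=\!\!(\cdot), \NE, \sqcup)$. The key observation is that the three-way equivalence in the theorem tells us that a single domain-independent DED $\D$ is strongly first order if and only if it is definable in $\FO(=\!\!(\cdot), \NE, \sqcup)$, and this definability condition is the one that composes nicely across a family.

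The forward direction is immediate: if $\FO(\DD) \equiv \FO$ then for every individual $\D \in \DD$ we have $\FO(\D) \leq \FO(\DD) \equiv \FO$, so $\FO(\D) \equiv \FO$ and $\D$ is strongly first order by definition. The substance lies in the converse. First I would apply Theorem \ref{theo:SFO_DED_Chara} to each $\D \in \DD$ separately: since each $\D$ is a strongly first order domain-independent DED, the implication $1 \to 3$ gives that each $\D$ is definable in $\FO(=\!\!(\cdot), \NE, \sqcup)$. Then, invoking Proposition \ref{propo:def_expr} with $\EE = \DD$ and $\DD$-role played by the family $\{=\!\!(\cdot), \NE, \sqcup\}$, I conclude that $\FO(\DD) \leq \FO(=\!\!(\cdot), \NE, \sqcup)$, because every atom of every $\D \in \DD$ is uniformly replaceable by its defining formula.

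Finally I would close the loop via the safety of the base logic: Theorem \ref{theo:upcl_sfo} gives $\FO(=\!\!(\cdot), \NE) \equiv \FO$ (since $\NE \in \DD^\uparrow$ and constancy atoms are included), and Theorem \ref{thm:sqcup_safe} upgrades this to $\FO(=\!\!(\cdot), \NE, \sqcup) \equiv \FO$. Chaining the inequalities yields $\FO \leq \FO(\DD) \leq \FO(=\!\!(\cdot), \NE, \sqcup) \equiv \FO$, hence $\FO(\DD) \equiv \FO$, exactly as required.

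I do not expect any genuine obstacle here, as this corollary is essentially a bookkeeping consequence of the theorem. The one point that deserves care is that definability, unlike strong first-orderness per se, is preserved under taking the union of a family: the crucial feature is that all the defining formulas live in a \emph{single} fixed base logic $\FO(=\!\!(\cdot), \NE, \sqcup)$ whose safety is already known, so adding finitely many (indeed arbitrarily many) such definable dependencies at once cannot escape that base logic. This is precisely why the per-dependency characterization in terms of definability, rather than merely in terms of expressive equivalence, is the right tool to make the family-level statement go through.
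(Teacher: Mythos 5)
Your proof is correct and follows essentially the same route as the paper: the forward direction via $\FO \leq \FO(\D) \leq \FO(\DD) \equiv \FO$, and the converse by applying Theorem \ref{theo:SFO_DED_Chara} to each $\D$ individually to get definability in $\FO(=\!\!(\cdot), \NE, \sqcup)$, then chaining $\FO \leq \FO(\DD) \leq \FO(=\!\!(\cdot), \NE, \sqcup) \equiv \FO$ via Theorems \ref{theo:upcl_sfo} and \ref{thm:sqcup_safe}. Your closing remark correctly identifies why definability in a fixed safe base logic, rather than strong first-orderness alone, is what makes the family-level statement go through.
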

\begin{proof}
If $\FO(\DD) \equiv \FO$ then every $\D \in \DD$ must be strongly first order, because $\FO \leq \FO(\D) \leq \FO(\DD) \equiv \FO$. Conversely, suppose that every $\D \in \DD$ is strongly first order, and therefore by Theorem \ref{theo:SFO_DED_Chara} is definable in $\FO(=\!\!(\cdot), \NE, \sqcup)$: then $\FO \leq \FO(\DD) \leq \FO(=\!\!(\cdot), \NE, \sqcup) \equiv \FO$, and so $\FO(\DD) \equiv \FO$.  \qed
\end{proof}
\section{Conclusions and Further Work}
In this work, we were able to characterize which dependencies are 'safe' to add to First Order Logic with Team Semantics for a very general class of dependencies that captures most dependencies of interest to Database Theory (and nearly all the dependencies that have been studied so far in Team Semantics). 

This almost entirely solves the problem of which ``reasonable'' dependencies are strongly first order; generalizing the characterization from the class DED to the class DED$^{\not =}$ would further broaden the scope of this result, but since Proposition \ref{propo:DEDhom} fails to hold for this larger class it seems that some additional ideas would be required. 

Another direction worth pursuing at this point might be to try to characterize the classes of dependencies $\DD$ for which $\FO(\DD)$ is as expressive as existential second order logic (as is the case for functional dependence atoms) or for which the model checking problem $\FO(\DD)$ is in PTIME for finite models (as is the case for inclusion atoms). This last question could also have interesting implications in descriptive complexity theory. 

\bibliographystyle{splncs04}
\bibliography{biblio}
\end{document}